\definecolor{hypercolor}{HTML}{003399}
\newtheorem{thm}{Theorem}[section]
\newtheorem{lem}[thm]{Lemma}
\newtheorem{prop}[thm]{Proposition}
\newtheorem{cor}[thm]{Corollary}
\theoremstyle{definition}
\newtheorem{conj}[thm]{Conjecture}
\numberwithin{equation}{section}
\newcommand{\mylabel}[2]{#2\def\@currentlabel{#2}\label{#1}}
\newcommand{\betaloc}{\beta\text{-}\mathrm{loc}}
\newcommand{\nw}{\mathrm{nw}}				
\newcommand{\comple}{\mathrm{c}}			
\newcommand{\sprime}{{\scriptscriptstyle\prime}}
\newcommand{\R}{\mathbb{R}}
\newcommand{\Z}{\mathbb{Z}}
\newcommand{\Csp}{\mathscr{C}}
\newcommand{\Lsp}{\mathcal{L}}
\newcommand{\Psp}{\mathscr{P}}				
\newcommand{\mom}{\mathbb{L}}				
\newcommand{\momshe}{L_{\scriptscriptstyle\mathrm{SHE}}}
\newcommand{\ratekpz}{I_{\scriptscriptstyle\mathrm{KPZ}}}
\newcommand{\hf}[1]{\mathsf{f}_{\star#1}}	
\newcommand{\hfc}[1]{\mathsf{g}_{\star#1}}	
\newcommand{\PP}{\mathbf{P}}				
\newcommand{\EE}{\mathbf{E}}				
\newcommand{\Ebm}{\mathbb{E}_{\scriptscriptstyle\mathrm{BM}}}	
\newcommand{\Ebb}{\mathbb{E}_{\scriptscriptstyle\mathrm{BB}}}
\newcommand{\eventhh}{\mathcal{E}}			
\newcommand{\eventhhi}{\mathcal{D}}			
\newcommand{\eventinc}{\mathcal{A}}			
\newcommand{\termt}{\mathfrak{t}}			
\newcommand{\termts}{\termt\hspace{1pt}}	
\newcommand{\intermt}{\mathfrak{t}^{\sprime}}
\newcommand{\xx}{\mathbf{x}}				
\newcommand{\vecxx}{\vec{\xx}}				
\newcommand{\xxi}{\mathbf{x}^{\sprime}}		
\newcommand{\vecxxi}{\vec{\mathbf{x}}{}^{\hspace{1pt}\sprime}}
\newcommand{\mm}{\mathfrak{m}}				
\newcommand{\vecmm}{\vec{\mm}}				
\newcommand{\mmi}{\mm^{\hspace{1pt}\sprime}}
\newcommand{\vecmmi}{\vecmm{}^{\sprime}}	
\newcommand{\hv}{\mathbf{r}}				
\newcommand{\vechv}{\vec{\hv}}				
\newcommand{\hvi}{\hv^{\hspace{1pt}\sprime}}
\newcommand{\vechvi}{\vec{\hv}{}^{\hspace{1pt}\sprime}}
\newcommand{\hvspace}{\mathscr{R}}			
\newcommand{\hvspacec}{\hvspace_\mathrm{conc}}
\newcommand{\sgn}{\mathrm{sgn}}				
\newcommand{\quant}{\mathfrak{X}}			
\newcommand{\ii}{i}							
\newcommand{\jj}{j}						
\newcommand{\cc}{\mathfrak{c}}				
\newcommand{\optimal}{\boldsymbol{\xi}}		
\newcommand{\bb}{\mathfrak{b}}
\renewcommand{\aa}{\mathfrak{a}}			
\newcommand{\Aranch}{\mathfrak{C}}			
\newcommand{\tangent}{\mathbf{y}}			
\newcommand{\ind}{\mathbf{1}}				
\renewcommand{\d}{\mathrm{d}}
\newcommand{\noise}{\eta}					
\newcommand{\parab}{\mathsf{p}}				
\newcommand{\f}{\mathsf{f}}
\newcommand{\g}{\mathsf{g}}
\newcommand{\h}{\mathsf{h}}
\newcommand{\mps}{\h_\mathrm{\star}}		
\newcommand{\mpss}[1]{\h_\mathrm{\star #1}}	
\newcommand{\hh}{h}							
\newcommand{\ZZ}{Z}							
\newcommand{\WW}{W}							
\newcommand{\hk}{\mathsf{q}}				
\newcommand{\X}{X}							
\newcommand{\bm}{B}							
\newcommand{\Dist}{\mathrm{Dist}}			
\newcommand{\norm}[1]{\Vert #1\Vert}
\newcommand{\Norm}[1]{\big\Vert #1\big\Vert}
\newcommand{\ip}[1]{\langle #1\rangle}
\newcommand*{\Cdot}{{\raisebox{-0.5ex}{\scalebox{1.8}{$\cdot$}}}} 
\title{Spacetime limit shapes of the KPZ equation in the upper tails}
\author{Yier Lin}
\author{Li-Cheng Tsai}
\address[Yier Lin]{\hspace{1.5pt} Department of Statistics, University of Chicago}
\address[Li-Cheng Tsai]{\hspace{1.5pt} Department of Mathematics, University of Utah}
\subjclass[2020]{}%
\keywords{}
\begin{document}
\begin{abstract}
We consider the $n$-point, fixed-time large deviations of the KPZ equation with the narrow wedge initial condition.
The scope consists of concave-configured, upper-tail deviations and a wide range of scaling regimes that allows time to be short, unit-order, and long.
We prove the $n$-point large deviation principle and characterize, with proof, the corresponding spacetime limit shape.
Our proof is based on the results --- from the companion paper \cite{tsai2023high} --- on moments of the stochastic heat equation and utilizes ideas coming from a tree decomposition.
Behind our proof lies the phenomenon where the major contribution of the noise concentrates around certain corridors in spacetime, and we explicitly describe the corridors.
\end{abstract}

\maketitle

\section{Introduction}
\label{s.intro}
This paper studies the Kardar--Parisi--Zhang (KPZ) equation \cite{kardar86}
\begin{align}
	\label{e.kpz}
	&&
	\partial_t \hh = \tfrac12 \partial_{xx} \hh + \tfrac12 (\partial_{x} \hh)^2 + \noise,
	&&
	\hh=\hh(t,x),
	\ \ 
	(t,x)\in (0,\infty)\times\R,
\end{align} 
where $\noise=\noise(t,x)$ denotes the spacetime white noise.
The equation describes the evolution of a randomly growing interface $\hh(t,x)$, is a central model in nonequilibrium statistical mechanics, and has been widely studied in mathematics and physics; we refer to \cite{quastel2011introduction,corwin2012kardar,quastel2015one,chandra2017stochastic,corwin2019} for reviews on the mathematical literature related to the KPZ equation.

We prove the $n$-point, fixed-time Large Deviation Principle (LDP) for the KPZ equation with the narrow wedge initial condition and characterize, with proof, the spacetime limit shape.
Let $\hh_\nw$ be the solution of \eqref{e.kpz} with the narrow wedge initial condition, defined in Section~\ref{s.intro.scaling}.
Let $T$ be the scale of time, let $N$ be another scaling parameter, and scale $\hh_\nw$ as
\begin{align}
	\label{e.hscaled}
	\hh_{N}(t,x) := \tfrac{1}{N^2T} ( \hh_{\nw}(Tt,NTx) + \log\sqrt{T}).
\end{align}
This scaling is natural, as will be explained in Section~\ref{s.intro.scaling}.
The only assumptions on $N$ and $T$ are
\begin{align}
	\label{e.scaling}
	N\to\infty,
	\qquad
	N^2 T = N^2 T_N \to \infty.	
\end{align}
We take $T=T_N$ for the convenience of notation; our results actually hold for $T=A/N^2$ with $(N,A)\to(\infty,\infty)$.
Note that \eqref{e.scaling} allows $T_N\to 0$, $T_N\to 1$, and $T_N\to\infty$; see Section~\ref{s.intro.scaling} for a discussion on the scaling regimes.
Fix $\xx_1<\ldots<\xx_n$ and consider the event
\begin{align}
	\label{e.eventhh}
	\eventhh_{N,\delta}(\vechv)
	=
	\eventhh_{N,\delta}(1,\vecxx,\vechv)
	:=
	\big\{ |\hh_{N}(1,\xx_\cc)-\hv_\cc|\leq \delta, \cc=1,\ldots,n \big\}.
\end{align}
Postpone the definitions of the space of $n$-point deviations $\hvspacec$, the rate function $\ratekpz$, and the limit shape $\mps$ to Section~\ref{s.intro.ratefn}.
Roughly speaking, $\hvspacec$ consists of concave-configured, upper-tail deviations.
Let $\norm{f}_{\Lsp^\infty(\Omega)}:=\sup_{(t,x)\in\Omega}|f(t,x)|$.
We state the main result now.
\begin{thm}\label{t.main}
Notation as above and under \eqref{e.scaling}.
For any $\vechv\in(\hvspacec)^\circ$ and $R<\infty$,
\begin{align}
	\label{e.t.ldp}
	&\limsup_{\delta\to 0} \limsup_{N\to\infty}
	\Big| \frac{1}{N^3T_N} \log \PP[\eventhh_{N,\delta}(\vechv)] - \ratekpz(\vechv) \Big| 
	= 0,
\\
	\label{e.t.mps.}
	&\limsup_{\delta\to 0} \limsup_{N\to\infty}
	\frac{1}{N^3T_N} \log \PP\Big[ 
		\norm{\hh_N-\mps}_{\Lsp^\infty([\frac{1}{R},1]\times[-R,R])} > \tfrac{1}{R}  \ \Big| \ \eventhh_{N,\delta}(\vechv) 
	\Big]
	< 0.
\end{align}
\end{thm}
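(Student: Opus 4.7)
The plan is to use the Cole-Hopf transform $\hh_\nw = \log \ZZ_\nw + \mathrm{const}$ to pass to the stochastic heat equation (SHE) and reduce the LDP to the SHE moment asymptotics from the companion paper~\cite{tsai2023high}. The dual variables to $\vechv$ will be moment exponents $\vecmm \in \R_{\geq 0}^n$ (appropriately scaled with $N$), and the LDP speed $N^3 T$ will emerge from the scaling of these moments.

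For the upper bound in \eqref{e.t.ldp}, on the event $\eventhh_{N,\delta}(\vechv)$ we have $\ZZ_N(1,\xx_\cc)^{m_\cc} \geq \exp(m_\cc N^2 T (\hv_\cc - \delta))$, so Markov's inequality gives
\begin{align*}
	\PP[\eventhh_{N,\delta}(\vechv)]
	\leq
	\exp\!\Big(\!-N^2 T \sum_\cc m_\cc(\hv_\cc - \delta)\Big) \cdot \EE\!\Big[\prod_\cc \ZZ_N(1,\xx_\cc)^{m_\cc}\Big].
\end{align*}
Feeding in the SHE moment asymptotics $\frac{1}{N^3 T}\log\EE[\prod_\cc \ZZ_N^{m_\cc}] \to \momshe(\vecmm,\vecxx)$ and then infimizing over $\vecmm$ produces a Legendre dual, which by the definition of $\ratekpz$ in Section~\ref{s.intro.ratefn} delivers the matching upper bound.

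For the lower bound and the limit shape statement \eqref{e.t.mps.}, I would perform a change of measure that tilts the noise $\noise$ by a deterministic drift $\bar\noise$ supported on the union of spacetime corridors branching from the narrow-wedge origin to the $n$ terminal points $(1,\xx_\cc)$, as prescribed by the tree decomposition alluded to in the abstract. The Radon-Nikodym cost of the tilt is $\frac{1}{2}\int \bar\noise^2\,\d t\,\d x$, and $\bar\noise$ is chosen so that the tilted KPZ solution concentrates on the deterministic profile $\mps$; matching the quadratic tilt cost with the Legendre dual from the upper bound recovers $\ratekpz(\vechv)$ exactly. Under this tilt, SHE fluctuation estimates deliver $\hh_N \to \mps$ in $\Lsp^\infty([\tfrac{1}{R},1]\times[-R,R])$, with any macroscopic deviation from $\mps$ incurring an extra exponential cost in $N^3 T$, which yields \eqref{e.t.mps.}.

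The main obstacle is making the lower bound rigorous: the $(\partial_x \hh)^2$ nonlinearity obstructs a direct Girsanov tilt on $\hh$, so the tilt is best performed at the level of $\ZZ$ via the Feynman-Kac representation, and one must then translate back to $\hh$ with good control on fluctuations along the corridors. I expect this to require a multi-scale ``probe point'' argument — covering $[\tfrac{1}{R},1]\times[-R,R]$ by a fine grid and augmenting the $n$-point event by candidate deviations at each grid point — combined with continuity and strict convexity (or uniqueness of the minimizer) of the variational problem defining $\ratekpz$ on $\hvspacec^\circ$. The order of limits ($\delta\to 0$ after $N\to\infty$) and the degeneracy of the narrow wedge at $t=0$ are additional technical points requiring care.
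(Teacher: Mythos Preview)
Your upper bound for \eqref{e.t.ldp} via Markov's inequality and the moment Lyapunov exponent is correct and matches the paper. The lower bound also follows directly from the moment asymptotics by the standard G\"artner--Ellis argument: Properties~\ref{property.limit}--\ref{property.legendre} give that the limiting log moment generating function of $(\hh_{N,\alpha}(1,\xx_\cc))_\cc$ exists and is strictly convex on $[0,\infty)^n$, so the $n$-point LDP with rate $\ratekpz$ on $\hvspacec^\circ$ follows (Corollary~\ref{c.ldp.soften}). No change of measure is needed.

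The genuine gap is in your approach to \eqref{e.t.mps.}. A Girsanov tilt of the noise by a deterministic drift $\bar\noise$ supported on the noise corridors cannot work as stated: the corridors $\{(t,\optimal_\cc(1-t))\}$ are one-dimensional curves in two-dimensional spacetime, so any $\Lsp^2$ perturbation supported on them is zero almost everywhere and has zero cost and zero effect. If instead you thicken the corridors to width $\epsilon$, the $\Lsp^2$ cost $\tfrac12\int\bar\noise^2$ scales like $\epsilon^{-1}$ times a finite quantity, and matching this to the speed $N^3T$ forces a specific $\epsilon\sim N^{-1}$; but then you must show that this singular tilt actually drives $\hh_N$ to $\mps$ uniformly, which is at least as hard as the original problem. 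More fundamentally, the regime \eqref{e.scaling} is strictly \emph{beyond} Freidlin--Wentzell (see Section~\ref{s.intro.scaling}), and the Girsanov picture is native to the Freidlin--Wentzell scale. The paper explicitly notes that its approach bypasses the Jensen--Varadhan picture for this reason.

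The paper's route to \eqref{e.t.mps.} is entirely moment-based. For a point $(\intermt,\xxi_{\aa_0})$ on a corridor, one shows that removing the contribution of $\hh_N(\intermt,\xxi_{\aa_0})$ near the value $\mps(\intermt,\xxi_{\aa_0})$ from the $n$-point moment $\EE[\prod_\cc \ZZ_{N,\alpha}(1,\xx_\cc)^{N\mm_\cc}]$ makes the moment exponentially smaller. This uses a localization property of the Feynman--Kac Brownian motions (Property~\ref{property.loc}), the semigroup identity to cut at time $\intermt$, and the tree-decomposition identity \eqref{e.treesg} to recombine. Off the corridors, one propagates along characteristics using that $\mps$ solves \eqref{e.iburgers} classically there, together with an FKG-type inequality \eqref{e.fkg} to control $\ZZ_N(t_1,x;t_2,y_2)$ by its typical (heat-kernel) value conditionally on $\eventhh_{N,\alpha,\delta}$. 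Your ``probe point'' idea is in spirit correct for the final step --- the paper does pass from pointwise to uniform via H\"older-type continuity estimates (Proposition~\ref{p.conti}) --- but the core mechanism for pinning $\hh_N$ to $\mps$ is moment comparison, not a noise tilt.
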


\subsection{Scaling, SHE, initial condition}
\label{s.intro.scaling}
Let us explain the scaling \eqref{e.hscaled}.
Common wisdom in the study of the KPZ equation says that the quadratic term $(\partial_x\hh)^2$ in \eqref{e.kpz} dominates the large-scale behaviors of the equation.
We take the scaling \eqref{e.hscaled} to satisfy the relation $(\text{height scale})\cdot T=(\text{space scale})^2$, which ensures that the quadratic term remains invariant after scaling:
$
	\partial_t \hh_N = \tfrac{1}{2N^2T} \partial_{xx} \hh_N + \tfrac12 (\partial_{x} \hh_N)^2 + \tfrac{1}{\sqrt{N^5T^2}} \noise.
$
The relation fixes the relative scaling between height and space, and $N$ parameterizes an additional degree of freedom.

Let us examine the conditions in \eqref{e.scaling} in short, unit, and long time.
In short time, the regime given by $N\to \infty$ and $N^2T=1$ correspond to the Freidlin--Wentzell or weak-noise LDP for the KPZ equation; see Section~\ref{s.intro.literature} for a literature review.
The deviations probed under \eqref{e.scaling} are those (much) larger than the Freidlin--Wentzell ones.
In unit-order time, $T\to 1$, \eqref{e.scaling} covers the deviations at \emph{all} scales much larger than one. 
In long time, $T\to\infty$, the commonly consider \textbf{hyperbolic scaling regime} corresponds to $N=1$, and the deviations probed under \eqref{e.scaling} are those (much) larger than the hyperbolic ones.

Hereafter, we write $T_N=T$ to simplify notation and assume $N\to\infty$ is taken under \eqref{e.scaling}.

We work with the Hopf--Cole solution of~\eqref{e.kpz}.
Consider the Stochastic Heat Equation (SHE)
\begin{align}
	\label{e.she}
	\partial_t \ZZ = \tfrac12 \partial_{xx} \ZZ + \noise \ZZ,
\end{align}
and define the Hopf--Cole solution as $\hh:=\log\ZZ$.
The narrow-wedge initial condition for the KPZ equation is defined as $\hh_{\nw}:=\log\ZZ_{\mathrm{delta}}$, where $\ZZ_{\mathrm{delta}}$ solves the SHE \eqref{e.she} with $\ZZ_{\mathrm{delta}}(0,\Cdot)=\delta_0$, and $\ZZ_{\mathrm{delta}}|_{(0,\infty)\times\R}>0$ thanks to \cite{mueller91,morenoflores2014strict}.
In short time, $\ZZ_\mathrm{delta}$ is controlled by the heat kernel $\hk(t,x):=(1/\sqrt{2\pi t})\exp(-x^2/(2t))$.
Under the scaling of consideration, it becomes $(N^2T)^{-1}\log\hk(Tt,NTx) = -x^2/(2t)-(N^2T)^{-1}\log\sqrt{2\pi t}-(N^2T)^{-1}\log\sqrt{T}$.
The second last term tends to zero, while the last term is countered in \eqref{e.hscaled}.

\subsection{Rate function, space of $n$-point deviations, limit shape}
\label{s.intro.ratefn}
We begin by recalling the hydrodynamic limit of $\hh_N$.
Let $\parab(t)=\parab(t,x):=-x^2/(2t)$.
In the hyperbolic scaling regime ($N=1$ and $T\to\infty$), $\hh_N(t,x)$ converges (say, in probability) to $\parab(t,x)-t/24$ \cite{amir10}.
Under \eqref{e.scaling}, applying the scaling in \eqref{e.hscaled} gives $(\parab(Tt,NTx)-Tt/24)/(N^2T) = \parab(t,x) + o_N(1)$, so the hydrodynamic limit of $\hh_N$ should just be $\parab$.

Let us define the rate function and space of $n$-point deviations.
To streamline notation, we consider a general $t\in(0,1]$, though Theorem~\ref{t.main} takes $t=1$ only.
For $\hv_\cc\geq \parab(t,\xx_\cc)$, $\cc=1,\ldots,n$, let $\hf{,t,\vecxx,\vechv}=\hf{}$ be the piecewise $\Csp^1$ function on $\R$ characterized by the properties: $\hf{}(\xx_\cc)=\hv_\cc$, for all $\cc$; $\hf{}\geq \parab(t)$; $\hf{}(x)=\parab(t,x)$ for all $|x|$ large enough; $\partial_x\hf{}$ is constant on $\{\hf{}>\parab(t)\}\setminus\{\xx_1,\ldots,\xx_n\}$; $\hf{}$ is $\Csp^1$ except at $\xx_1,\ldots,\xx_n$.
See Figure~\ref{f.shape.termt}.
The rate function is
\begin{align}
	\label{e.ratekpz}
	\ratekpz(t,\vecxx,\vechv)
	:=
	\int_{\R} \d x \, \big( \tfrac12( \partial_x \hf{,t,\vecxx,\vechv})^2 - \tfrac12(\partial_x\parab(t))^2 \big).
\end{align}
This rate function coincides with that of a Brownian Motion (BM) that evolves in $x$ conditioned to stay above the parabola $\parab(t,x)=-x^2/(2t)$.
Such a coincidence can be understood from the perspective of Gibbs line ensembles \cite{corwin2014brownian,corwin2016kpz}, though we arrive at \eqref{e.ratekpz} through taking the Legendre transform of the moment Lyapunov exponents.
Evaluating $\ratekpz$ at one point recovers the $3/2$ power law: $\ratekpz(t,0,\hv)=(4/3)(2/t)^{1/2}\,\hv^{3/2}$, for $\hv\geq0$.
This rate function should describe all upper-tail deviations in
\begin{align}
	\label{e.hvspace}
	\hvspace(t,\vecxx) := \{ \vechv=(\hv_\cc)_{\cc=1}^n : \hv_\cc \geq \parab(t,\xx_\cc), \forall \cc \}.
\end{align}
Our method relies on \emph{positive} moments and can access the subspace
\begin{align}
	\label{e.hvspacec}
	\hvspacec(t,\vecxx) := \{ \vechv\in\hvspace(t,\vecxx) : \hf{,t,\vecxx,\vechv} \text{ is concave} \},
\end{align}
whose interior is $\hvspacec(t,\vecxx)^\circ=\{ \vechv: \hv_\cc>\parab(t,\xx_\cc),\forall\cc; \ \hf{,t,\vecxx,\vechv} \text{ is strictly concave} \}$.
When $t=1$ and $\vecxx$ has been fixed, we write $\ratekpz(\vechv)$ and $(\hvspacec)^\circ$ to simplify notation, like in Theorem~\ref{t.main}.

To define the limit shape, consider the integral of Burgers equation and its backward version
\begin{align}
	\label{e.iburgers}
	&&&&\partial_t \h &= \tfrac12 (\partial_x \h)^2,
	&&
	(t,x)\in(0,1)\times\R.
\\
	\label{e.iburgers.back}
	&&&&\partial_s\h(1-s,x) &= -\tfrac12 (\partial_x \h(1-s,x))^2,
	&&
	(s,x)\in(0,1)\times\R.
\end{align}
Being fully nonlinear, \eqref{e.iburgers} can have multiple weak solutions under a given initial condition, but has a unique entropy solution, and similarly for \eqref{e.iburgers.back}.
Fix $\vechv\in(\hvspacec)^\circ=\hvspacec(1,\vecxx)^\circ$ and let $\hf{}:=\hf{,1,\vecxx,\vechv}$ be as before.
The limit shape $\mps$ is the entropy solution of the \emph{backward} equation \eqref{e.iburgers.back} with the terminal condition $\mps(1,\Cdot)=\hf{}$.
It is readily checked that a weak solution of \eqref{e.iburgers.back} is automatically a weak solution of \eqref{e.iburgers}.
On the other hand, an entropy solution of \eqref{e.iburgers.back} is in general \emph{not} an entropy solution of \eqref{e.iburgers}.
In particular, $\mps$ is a non-entropy solution of \eqref{e.iburgers}.
See the second last paragraph in Section~\ref{s.intro.mechanism} for a heuristic explanation why the limit shape should be a non-entropy solution of \eqref{e.iburgers}.

\begin{figure}
\begin{minipage}[b]{.48\linewidth}
\fbox{\includegraphics[width=\linewidth]{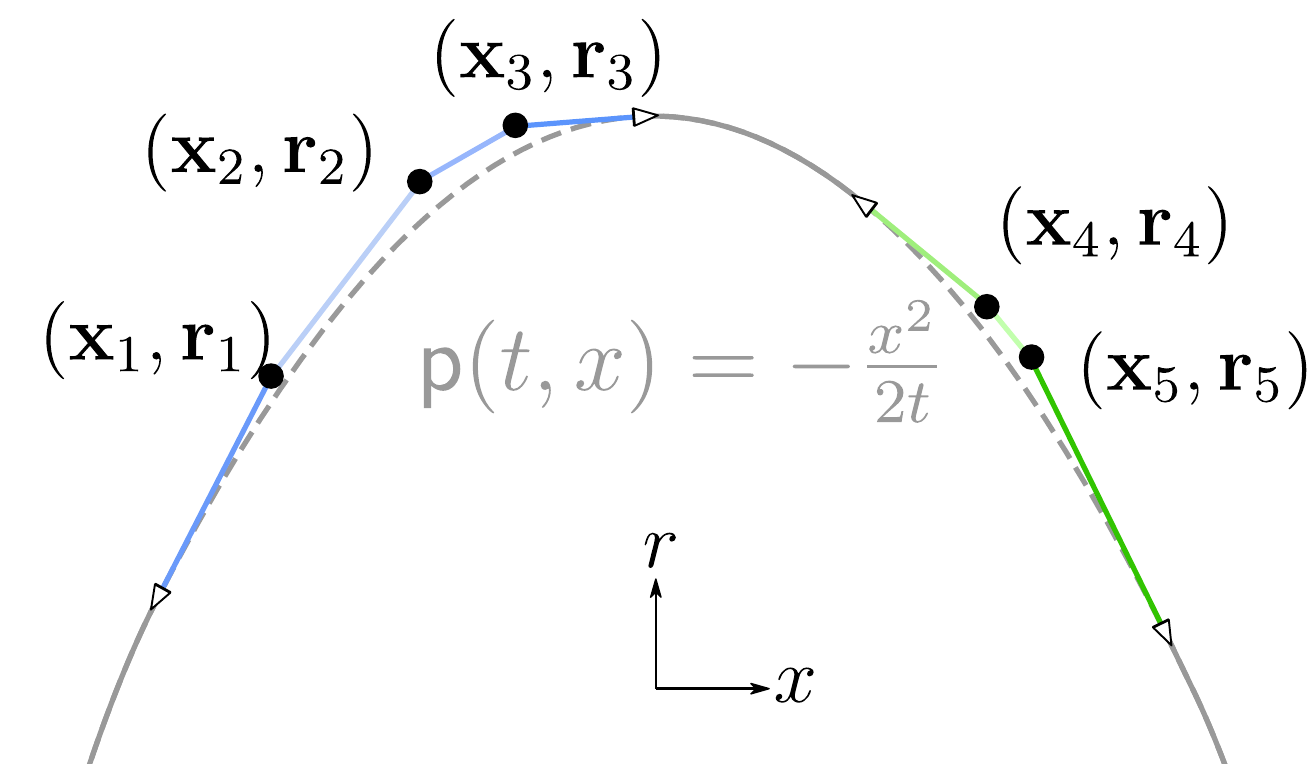}}
\caption{$\hf{}=\hf{,t,\vecxx,\vechv}$}
\label{f.shape.termt}
\end{minipage}
\hfill
\begin{minipage}[b]{.5\linewidth}
\fbox{\includegraphics[width=\linewidth]{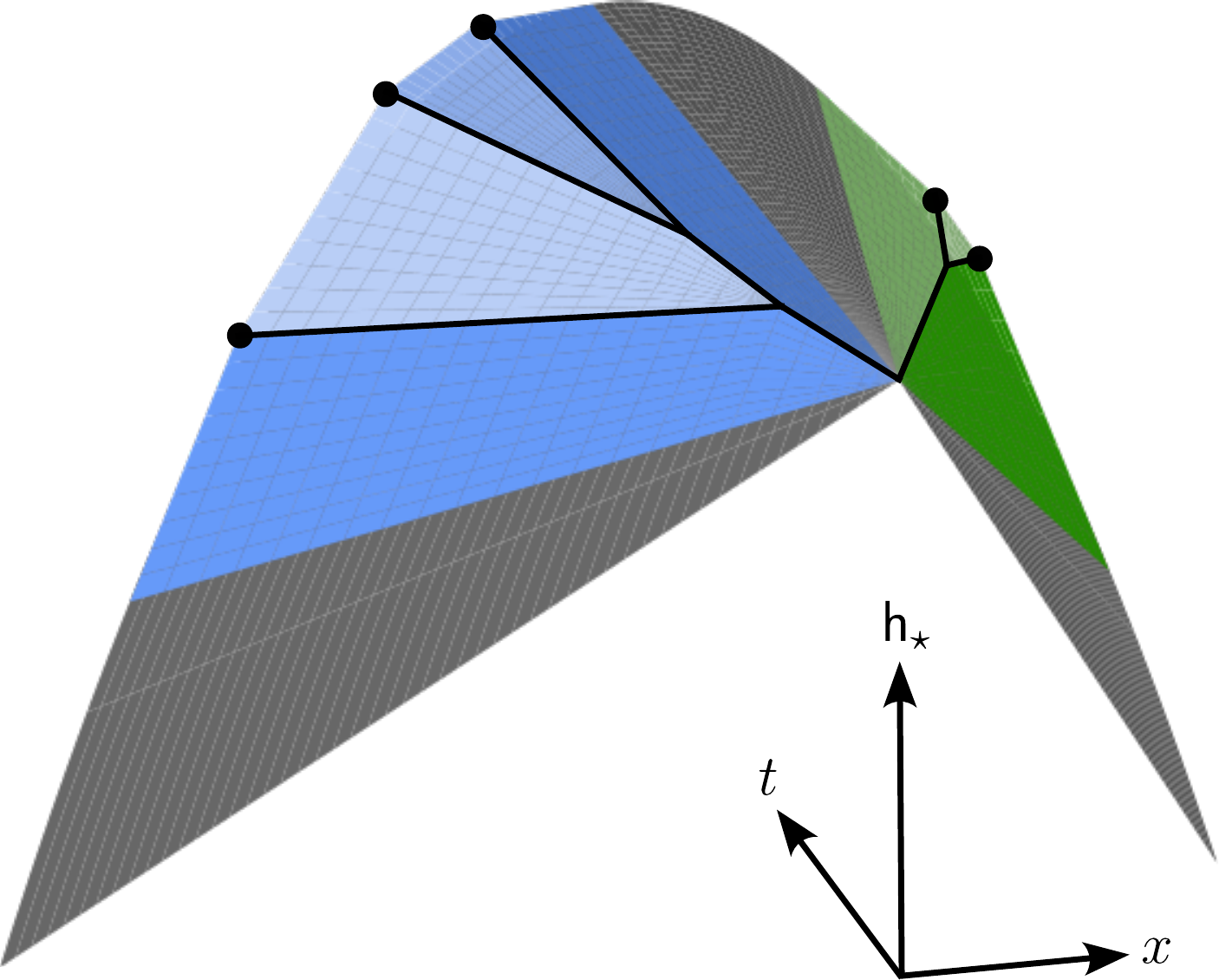}}
\caption{Limit shape $\mps$}
\label{f.shape}
\end{minipage}
\end{figure}
\begin{figure}
\begin{minipage}[b]{.52\linewidth}
\fbox{\includegraphics[width=\linewidth]{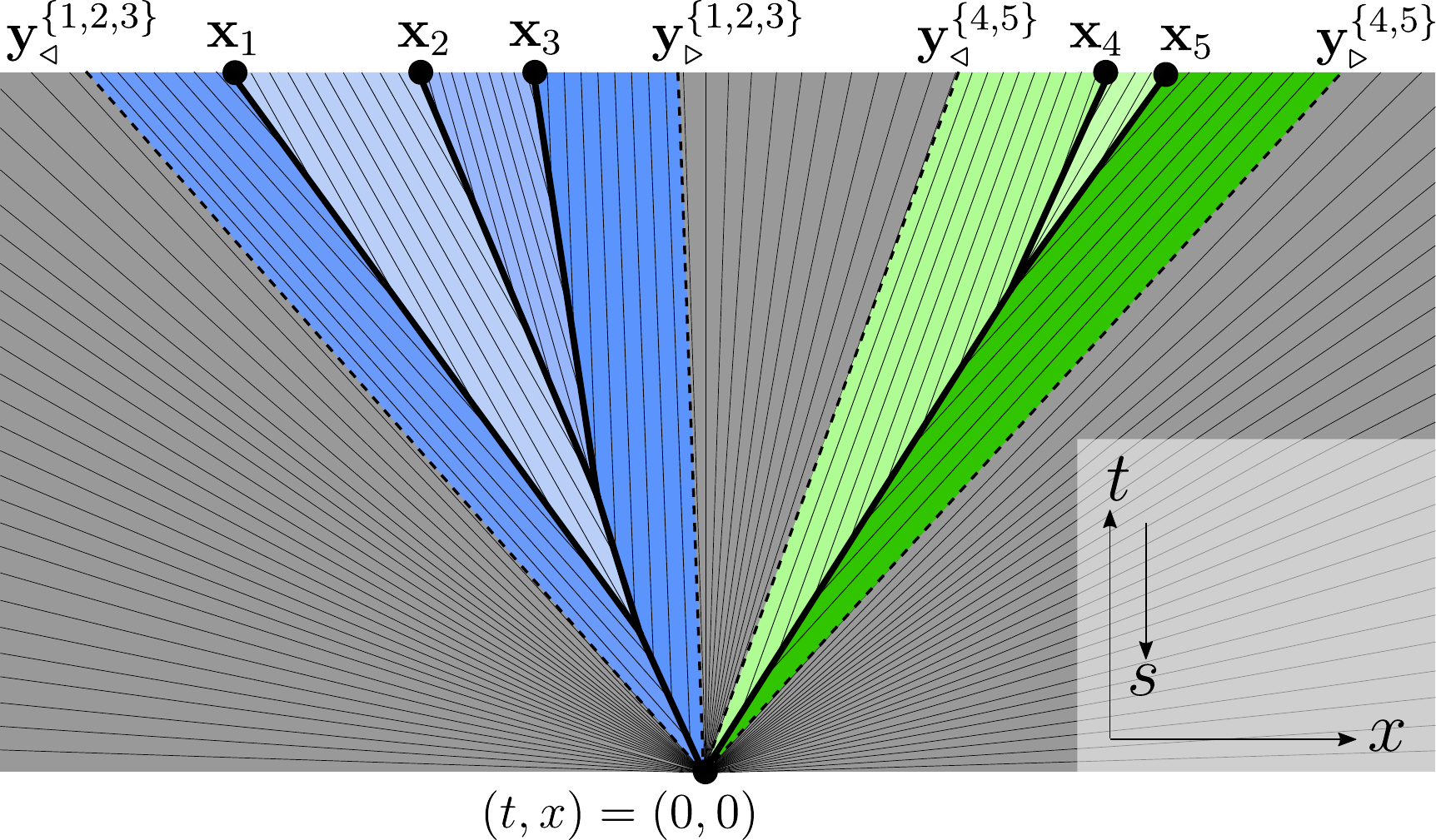}}
\caption{Shocks, aka noise corridors, (thick solid lines); characteristics (thin solid lines)\\ \hspace{1pt}}
\label{f.shocks}
\end{minipage}
\hfill
\begin{minipage}[b]{.46\linewidth}
\fbox{\includegraphics[width=\linewidth]{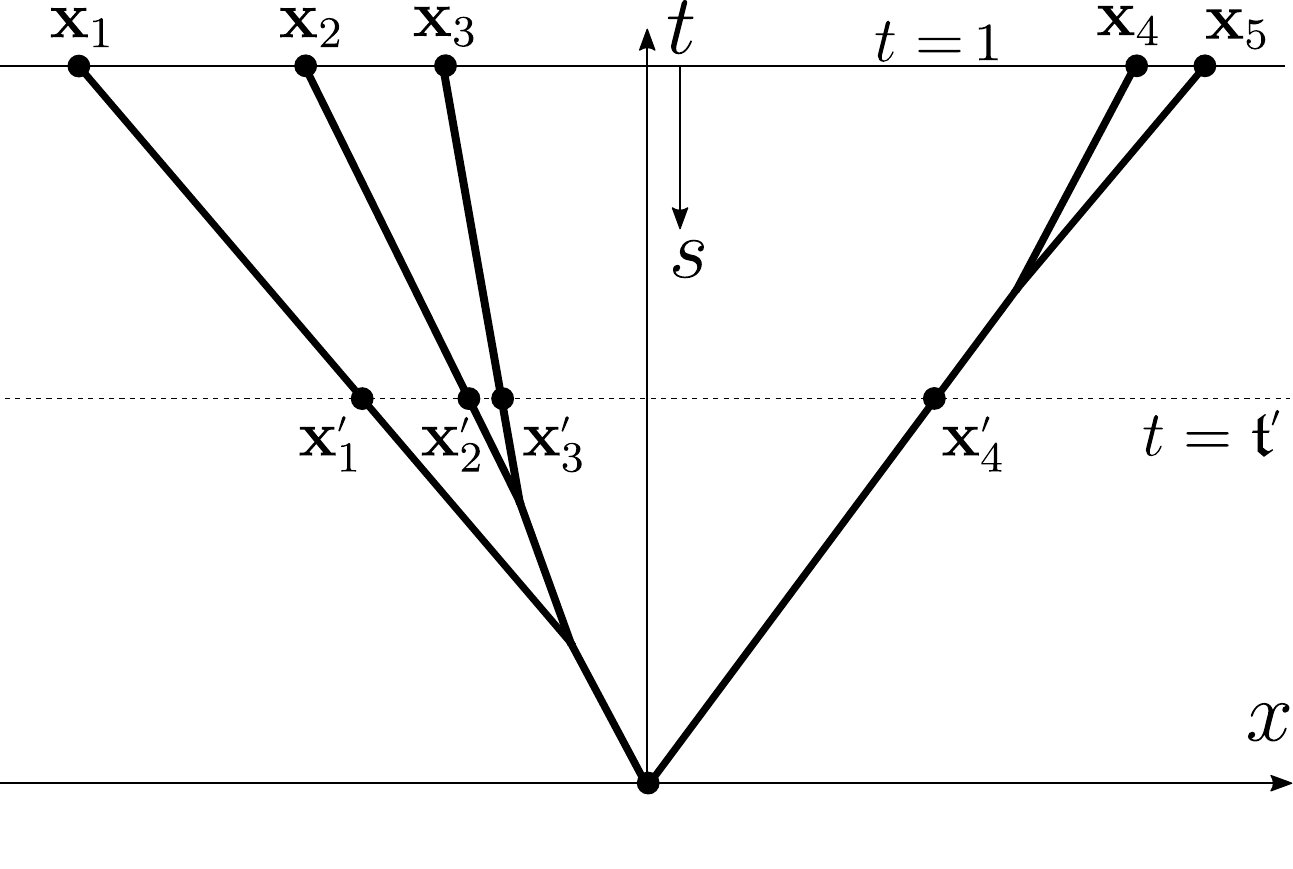}}
\caption{Intermediate time configuration.
In this figure, $\Aranch(1)=\{1\}$, $\Aranch(2)=\{2\}$, $\Aranch(3)=\{3\}$, and $\Aranch(4)=\{4,5\}$.}
\label{f.intermt}
\end{minipage}
\end{figure}

Let us describe a few properties of $\mps$, which are not hard to verify; see \cite[Section~8.3]{tsai2023high} for example.
The (trajectories of the) shocks are piecewise linear; when viewed in \emph{backward} time, they start at the $\xx_\cc$s, can merge, but never branch.
Let $\optimal_\cc=\optimal_\cc(s)$ denote the shocks, in \emph{backward} time $s$.
Partition $\{1,\ldots,n\}$ into intervals $\bb$ such that $\cc,\cc'\in\bb$ if and only if $\optimal_\cc$ and $\optimal_{\cc'}$ merge within $s\in[0,1)$.
There exist $\tangent^\bb_{\triangleleft}<\tangent^\bb_{\triangleright}$ such that each shock $\optimal_\cc$ with $\cc\in\bb$ is contained in the cone $\{(t,x): \tangent^\bb_{\triangleleft} t \leq x\leq\tangent^\bb_{\triangleright} t\}$; the cones themselves are disjoint.
Outside the cones, $\mps=\parab$.
Within each cone, $\mps$ is piecewise linear, and the kinks occur exactly along the shocks. 
See Figure~\ref{f.shape}.
Recall that the characteristics are lines, in spacetime, along which $\partial_x\mps$ is constant.
Outside the cones, the characteristics are lines emanating from $(t,x)=(0,0)$. 
Within each region where $\mps$ is linear, the characteristics are parallel lines.
Further, when viewed in forward (resp.\ backward) time, the characteristics always emerge from (resp.\ merge into) the shocks.
See Figure~\ref{f.shocks}.

\subsection{Mechanism of the LDP, noise corridors}
\label{s.intro.mechanism}
Let us explain the physical mechanism behind the LDP.
The explanation is not fully rigorous and intended to give an overall understanding only.

Consider first the case $n=1$ and $\xx_1=0$.
Use the Feynman--Kac formula to express $\hh_N$ as
\begin{align}
	\label{e.feynmankac}
	\hh_N(t,x) = \tfrac{1}{N^2T}\log \Ebm\big[ e^{\int_0^{Tt} \d s\, \noise(Tt-s,\X(s))} \delta_0(\X(Tt)) \big],
\end{align}
where $\X=$ (standard BM)$+NTx$, and the exponential is in the Wick sense.
The formula~\eqref{e.feynmankac} suggests that the ``larger'' the noise $\noise$ is, the larger $\hh_N$ becomes.
Note that this description is not rigorous because $\noise$ is not function-valued.
The target value $\hv_1=\hv$ for $\hh_N(1,0)$ exceeds the typical value $\parab(1,0)$.
Hence the noise would make itself anomalously large so that $\hh_N(1,0)$ reaches $\hv$, and, in doing so, the noise would seek the ``most efficient strategy''.
The most efficient strategy should be having $\noise$ anomalously large in a small region around $[0,1]\times\{0\}=\{(t,x):x=0\}$ in spacetime.
This strategy is efficient because $\noise$ needs only to perform anomaly in a small region; this strategy is effective because, in \eqref{e.feynmankac}, those realizations of the BM that stay around $x=0$ will pick up this anomaly and make $\hh_N(1,0)$ large.
We call $[0,1]\times\{0\}$ the \textbf{noise corridor} and call the phenomenon that the noise localizes around the noise corridor the \textbf{noise-corridor effect}.

At the level of \emph{one-point} LDPs, the noise-corridor effect has been observed and noted.
The use of subadditive structures in proving upper-tail LDPs \cite{seppalainen1998large,deuschel99,georgiou2013,janjigian15,janjigian19} (implicitly) demonstrated the noise-corridor effect.
The effect was pointed out under the name of the weak-coupling LDP regime in \cite{ledoussal16long}.
The analysis of the upper-tail asymptotics of the most probable shapes in the Freidlin--Wentzell LDP for the KPZ equation \cite{kolokolov07,kolokolov09,kamenev16,meerson16,krajenbrink21,krajenbrink22flat,tsai22,gaudreaulamarre2023kpz} also demonstrated the noise-corridor effect.

Here, we take one step further and consider $n$-point deviations.
A similar noise-corridor effect would occur: The noise would become anomalously large only in small regions around $\{(t,\optimal_\cc(1-t))\}_{t\in[0,1],\cc=1,\ldots,n}$.
We call (the trajectories of) $\optimal_\cc$, $\cc=1,\ldots,n$, the \textbf{noise corridors}.
The noise corridors are defined as the shocks in this paper, which is convenient but perhaps not intuitive.
In \cite{tsai2023high}, an alternative definition (via certain attractive Brownian particles) was offered and shown to be equivalent to the shock definition.
To the best of our knowledge, our work is the first to describe the noise-corridor effect beyond one-point LDPs.

The noise-corridor effect offers a heuristic explanation why $\mps$ should be a non-entropy solution of \eqref{e.iburgers}.
The effect says that the noise $\noise$ should be dormant in most of spacetime.
Informally removing $\noise$ from \eqref{e.kpz} and performing the scaling $\hh_N(t,x)=(N^2T)^{-1}(\hh_\nw(Tt,NTx)+\log\sqrt{T})$ give \eqref{e.iburgers}.
The solution $\mps$ is non-entropic because of the behaviors of $\noise$ around the noise corridors, and the behaviors produce artificial/non-entropy shocks along the corridors.
Put it differently, even though the noise disappears in the limiting equation \eqref{e.iburgers}, it leaves a lasting effect by producing non-entropy shocks.
This explanation comes from the picture given by Jensen and Varadhan \cite{jensen00,varadhan04}.

We emphasize that our approach is \emph{very different} from that of Jensen and Varadhan.
In the Jensen--Varadhan picture, the space of process-level deviations consists of weak solutions of \eqref{e.iburgers}, and the process-level rate function measures how ``non-entropic'' a solution is.
The major challenge in implementing the Jensen--Varadhan picture is to understand the space of all weak solutions of \eqref{e.iburgers}.
Due to this challenge, the proof of the full LDP remains largely open and has only been achieved for the TASEP by the use of exact formulas \cite{quastel21}.
Our approach, on the other hand, is to postulate what the limit shape is and to verify the postulate, by using a tree structure (described in Section~\ref{s.intro.moment}) and information extracted from the moments of the SHE.
In doing so, we \emph{bypass} the need to consider general --- potentially pathological --- weak solutions.
Our approach does not rely on integrability or explicit formulas.
Part of the reason is technical: The explicit formulas of the moments of the SHE do not seem to provide information for proving localization results such as Property~\ref{property.loc} in Section~\ref{s.momt.tsai2023high}.
More importantly, we hope that our approach can shed light on the physical mechanism of forming the deviations and limit shape.

\subsection{Ingredients of the proof, tree decomposition}
\label{s.intro.moment}
The starting point of our proof is the fixed-time, $n$-point moment Lyapunov exponent of the SHE.
Recall that $\exp(\hh) = \ZZ$, so the $n$-point moment Lyapunov exponent of the SHE is exactly the $n$-point moment generating function of $\hh$.
The results from \cite{tsai2023high} show that the exponent exists for all positive powers and characterize it.
Thanks to the condition $N\to\infty$ in \eqref{e.scaling}, characterizing the exponent for all positive powers reduces to the same task for positive \emph{integer} powers only (see \eqref{e.momentL} and the discussion after it).
Via the Legendre transform, the results on the $n$-point moment Lyapunov exponent immediately give the $n$-point LDP for the KPZ equation.

To obtain the \emph{spacetime} limit shape requires going beyond fixed-time LDPs.
The main novelty of our proof is a way of leveraging the fixed-time moment Lyapunov exponents to the spacetime limit shape.
We accomplish this through the idea of a tree decomposition, which is described next.

To describe the decomposition, we associate the moment $\EE[\prod_{\cc=1}^n \ZZ_\mathrm{delta}(T,NT\xx_\cc)^{N\mm_\cc}]$ with the tree given by the trajectories of the noise corridors.
The root of the tree is at $(t,x)=(0,0)$, which corresponds to the delta initial condition.
The leaves are at $(t,x)=(1,\xx_1),\ldots,(1,\xx_n)$, which correspond to the points where we probe the deviations.
The leaf $(1,\xx_\cc)$ carries mass $\mm_\cc$, which is the power of the moment at $(1,\xx_\cc)$, in the post-scale units.
Any other node on the tree carries the sum of the masses of its children.
Let $0 \xrightarrow{\scriptscriptstyle 1}(\vecxx,\vecmm)$ denote this weighted tree.
Take any intermediate time $\intermt\in(0,1)$, let $\vecxxi=(\xxi_\aa)_\aa$ denote the nodes of the tree at time $\intermt$, let $\mmi_\aa$ denote the mass of $(\intermt,\xxi_\aa)$, and let $\Aranch(\aa)$ be the set of indices of those leaves that are the offspring of $(\intermt,\xxi_\aa)$; see Figure~\ref{f.intermt}.
The tree decomposition says that
\begin{align*}
	\big(\text{moment of }0 \xrightarrow{1}(\vecxx,\vecmm)\big)
	&\text{ should be approximately equal to }
\\
	&\big(\text{moment of }0 \xrightarrow{\intermt}(\vecxxi,\vecmmi)\big)
	\cdot
	\prod_{\aa}
	\big(\text{moment of }\xxi_\aa \xrightarrow{1-\intermt}(\vecxx_\cc,\vecmm_\cc)_{\cc\in\Aranch(\aa)}\big),
\end{align*}
which is natural in light of the noise-corridor effect.
This decomposition is useful because it allows us to access the information at any intermediate time $\intermt$ through the right hand side, which involves fixed-time moments only.

The idea of the tree decomposition enters the first and most important step of our proof of the limit shape, which is carried out in Section~\ref{s.shape.corridors}.
In this step, we prove that $\hh_N$ concentrates around $\mps$ at any $(t,x)$ along the noise corridors, namely $(t,x)=(\intermt,\xxi_\aa)$, for some $\aa$.
Roughly speaking, we use the idea to argue that, if one removes the contribution (within an $n$-point moment of the SHE) of $h_N(\intermt,\xxi) $ around the value $\mps(\intermt,\xxi) $, the result becomes exponentially smaller. 
We will not attempt to formulate the tree decomposition itself precisely, but rather \emph{use the idea} to carry out the proof only.

Once the first step is accomplished, in Section~\ref{s.shape.outcorridors}, we infer the value of $\hh_N$ outside the noise corridor from its value along the noise corridors and its increment along the characteristics. 

\subsection{Hyperbolic scaling regime:\ discussions and potential extensions}
\label{s.intro.hyperbolic}
In long time, $T\to\infty$, perhaps the most natural scaling regime is $N=1$, which we call the hyperbolic scaling regime.

In the hyperbolic regime, the one-point upper-tail LDP and tail bounds have been obtained for various initial and boundary conditions in \cite{corwin20lower,corwin20general,das21,lin21half,ghosal20}; the bounds in the first two works actually holds for all time larger than any given positive threshold.
The recent work \cite{ganguly22} went beyond one-point deviations and studied the terminal-time limit shape.
This work gave many detailed and sharp bounds that hold for all time larger than any given positive threshold.
When specialized into the hyperbolic regime and $\vechv\in\hvspacec$, their results would produce the fixed-time, $n$-point LDP and give one-sided bounds for the terminal-time limit shape. (When $n=1$, their results give the full terminal-time limit shape; see Theorem~9 there.) 
For comparison, our result gives the full \emph{spacetime} limit shape $\mps=\mps(t,x)$ but does not cover the hyperbolic regime, because of the condition $N\to\infty$ in \eqref{e.scaling}.

Let us point out the potential of combining our methods and the results in \cite{ganguly22} to study the limit shapes in the hyperbolic regime.
Recall the Legendre duality between the rate functions and log moment generating functions.
In the hyperbolic regime, one can seek to infer from the results in \cite{ganguly22} the $n$-point LDP and to turn the LDP into the $n$-point moment Lyapunov exponent for \emph{all positive powers}.
As said, our proof can be viewed as a way of leveraging the fixed-time moment Lyapunov exponents to the spacetime limit shape.
Hence, in the hyperbolic regime, one can use these positive-power moment Lyapunov exponents as the input (that replaces Property~\ref{property.limit} in Section~\ref{s.momt.tsai2023high}) to run our proof.
A careful examination of our proof reveals that the proof goes through except where we use Property~\ref{property.loc}.
The property is used to argue for the localization of the contribution of the noise in the moments and is currently only known to hold under \eqref{e.scaling}.
If one can extend or replace this property, Theorem~\ref{t.main} for $N=1$ will follow.

\subsection{Beyond Theorem~\ref{t.main}, discussions and conjectures}
\label{s.intro.beyond}
First, we conjecture that the results of Theorem~\ref{t.main} hold for all $\vechv\in\hvspace$ under $N\geq1$ and $N^2T\to\infty$.
The limit shape should still be given by the entropy solution of the \emph{backward} equation \eqref{e.iburgers.back} with the terminal condition $\hf{}=\hf{,1,\vecxx,\vechv}$, which was defined for a general $\vechv\in\hvspace=\hvspace(1,\vecxx)$ in Section~\ref{s.intro.ratefn}.
The (conjectural) limit shape for $\vechv\notin\hvspacec$ looks more complicated than that for $\vechv\notin\hvspace$.
For example, the shocks of the former are not piecewise linear.
\begin{conj}
The results of Theorem~\ref{t.main} hold for all $\vechv\in\hvspace$ under $N\geq1$ and $N^2T\to\infty$.
\end{conj}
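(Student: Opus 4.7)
The plan is to address the two extensions of Theorem~\ref{t.main} --- beyond $\hvspacec$ and down to $N\geq 1$ --- separately, then combine. Both extensions require replacing missing inputs with new ones rather than re-engineering the core tree-decomposition argument.

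For the hyperbolic regime $N=1$, I would follow the roadmap already sketched in Section~\ref{s.intro.hyperbolic}. The idea is to take the terminal-time tail bounds and one-point LDP from \cite{ganguly22}, extend them via the present tree decomposition to an $n$-point LDP on $\hvspacec$, and then invert via Legendre duality to obtain the fixed-time $n$-point moment Lyapunov exponents for all positive real powers. These exponents then serve in place of Property~\ref{property.limit} in Section~\ref{s.momt.tsai2023high}. The one step that is \emph{not} free is a replacement for the localization Property~\ref{property.loc}, which underpins the noise-corridor concentration. I would try to derive such a localization directly from the one-point upper-tail bounds of \cite{corwin20general,ganguly22} by a spatial Markov / Gibbs-resampling argument: conditioning on $\ZZ(\intermt,\Cdot)$ at an intermediate time and using the Brownian Gibbs property of the KPZ line ensemble \cite{corwin2014brownian,corwin2016kpz} to show that, on the event $\eventhh_{N,\delta}(\vechv)$, the contribution of $\ZZ$ from spatial intervals far from the corridors is exponentially negligible.

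For the extension to $\vechv\in\hvspace\setminus\hvspacec$, the structural novelty is that the shocks of $\mps$ are no longer piecewise linear: the backward entropy solution can create curved shocks and post-terminal rarefactions. I would implement a multi-scale refinement of the tree decomposition on arbitrarily fine time partitions $0=\intermt_0<\ldots<\intermt_K=1$. On each slab $[\intermt_k,\intermt_{k+1}]$, the backward shocks of $\mps$ look approximately linear, so on that slab one can apply (a local version of) the concave-case Theorem~\ref{t.main} inside each cone of influence $\{\tangent^\bb_{\triangleleft}t\leq x\leq\tangent^\bb_{\triangleright}t\}$ separately. Stitching the slab-wise concentration statements together, together with a tightness/compactness argument in $K\to\infty$, would recover the curved shocks. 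The rate-function lower bound comes from the same Feynman--Kac, positive-moment Markov inequality as in the concave case, with $\hf{,1,\vecxx,\vechv}$ as the test profile.

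The hard part will be the \emph{matching upper bound} in the non-concave case. The positive-moment method underlying \cite{tsai2023high} is fundamentally tied to concave $\hf{}$: the Legendre transform of the moment Lyapunov exponent only recovers the convex envelope of $\ratekpz$ on $\hvspace$, so one cannot hope to upper-bound $\PP[\eventhh_{N,\delta}(\vechv)]$ for $\vechv\notin\hvspacec$ by a single $n$-point moment. Two possible avenues: (i) use $m$-point moments with auxiliary points inserted inside each non-concave gap, chosen so that the refined profile becomes piecewise concave and the induced rate matches $\ratekpz(\vechv)$ --- this is in the spirit of the tree decomposition but requires a careful optimization over auxiliary data; (ii) couple to a Brownian line ensemble directly under upper-tail conditioning and read off the rate as the BM-above-parabola rate for the unconstrained profile $\hf{}$. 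Avenue (i) looks more tractable within the present framework, and I expect the main technical work to lie in showing that optimizing over the auxiliary points produces exactly $\ratekpz(\vechv)$ and not some strict relaxation.
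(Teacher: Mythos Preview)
The statement you are addressing is a \emph{conjecture}, not a theorem: the paper explicitly presents it in Section~\ref{s.intro.beyond} as open and offers no proof. There is therefore no ``paper's own proof'' to compare against. What you have written is a research plan, not a proof, and you yourself flag the unresolved steps.

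Both gaps you identify are real and are precisely why the statement remains conjectural. For the hyperbolic regime $N=1$, the paper already says (last paragraph of Section~\ref{s.intro.hyperbolic}) that the entire machinery goes through \emph{except} for Property~\ref{property.loc}, and that extending or replacing this localization input is the missing piece. Your proposed Gibbs-resampling argument is a reasonable direction, but it is speculative: the Brownian Gibbs property gives resampling in the spatial variable at a \emph{fixed} time, whereas Property~\ref{property.loc} is a statement about localization of Feynman--Kac paths in \emph{spacetime}, so the bridge between the two is not obvious. For the extension to $\vechv\in\hvspace\setminus\hvspacec$, you correctly diagnose that positive moments only see the convex hull of $\ratekpz$, so the upper bound cannot come from a single $n$-point moment. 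Your avenue~(i) (inserting auxiliary points to render the profile piecewise concave) runs into the difficulty that $\hf{,1,\vecxx,\vechv}$ for non-concave $\vechv$ is already piecewise linear but with \emph{upward} kinks, and no finite insertion of points makes it concave; what one would need instead is a continuum of auxiliary constraints, which takes one outside the finite-moment framework of \cite{tsai2023high}. Avenue~(ii) is closer in spirit to \cite{ganguly22} but would require a conditional line-ensemble estimate that is not currently available.

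In short: your plan accurately locates the two obstructions, but neither suggested fix is yet a proof, and the paper does not claim otherwise.
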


Next, let us consider initial conditions other than the narrow-edge/delta one.
Since the SHE \eqref{e.she} is linear, the solution with a general initial condition can be written as the convolution of the delta-initial-condition solution with the initial condition.
This property has been used to derive one-point rate functions and tail probability bounds for general initial conditions in \cite{corwin20general,ganguly22,ghosal20}.
The situation becomes more intriguing for questions about limit shapes.
As seen in the study of the Freidlin--Wentzell LDP for the KPZ equation, the most probable shapes came exhibit symmetry breaking under the Brownian initial condition \cite{janas16,smith18,krajenbrink17short,krajenbrink22flat}.

To illustrate the symmetry breaking in a cleaner fashion, we consider the two-delta initial condition.
First, if the initial condition is $\ZZ(0,\Cdot)=\delta_{NT\xx_0}$, the LDP and limit shape follow the same description in Section~\ref{s.intro.ratefn} with suitable adaptations.
Namely, we should replace $\hf{,1,\vecxx,\vechv}$ with 
$\hf{,\xx_0;1,\vecxx,\vechv}(t,x):=\hf{,1,\vecxx-(\xx_0,\ldots,\xx_0),\vechv}(x-\xx_0)$, replace $\parab$  with
$\parab_{\xx_0}(t,x):=\parab(t,x-\xx_0)$, and define the rate function $\ratekpz(\xx_0;t,\vecxx,\vechv)$ and limit shape $\mpss{,\xx_0}$ similarly.
Now, consider the two-delta initial condition $\ZZ(0,\Cdot)=\delta_{-NT}+\delta_{+NT}$ and probe the upper tails of $\hh_N(1,x)$ at $x=0$.
More precisely, consider the event $\eventhh_{N,\delta}(1,0,\hv)$ for $\hv>\parab_{-1}(1,0)=\parab_{+1}(1,0)$.
Under $\eventhh_{N,\delta}(1,0,\hv)$, we expect to see a competition between the two deltas.
Let $\mpss{,\pm 1}$ be the limit shape when there is only one delta at $\xx_0=\pm 1$ (and for $\xx=0$ and the given $\hv$).
Set $\g_-:=\max\{\mpss{,- 1}, \parab_{+ 1}\}$ and $\g_+:=\max\{\mpss{,+ 1}, \parab_{- 1}\}$.
Note that $\g_-\neq\g_+$, $\g_-(t,x)=\g_+(t,-x)$, and $\ratekpz(-1;1,0,\hv)=\ratekpz(+1;1,0,\hv)$.

\begin{conj}\label{conj.symmetrybreaking}
Under the two-delta initial condition and \eqref{e.scaling} (or more generally $N\geq 1$ and $N^2T\to\infty$), for any $\hv > \parab_{-1}(1,0)=\parab_{+1}(1,0)$ and $R<\infty$,
\begin{align}
	&\limsup_{\delta\to 0} \limsup_{N\to\infty}
	\Big| \frac{1}{N^3T}\log\PP\big[ \eventhh_{N,\delta}(1,0,\hv) \big] + \ratekpz(\pm 1;1,0,\hv) \Big|
	=
	0,
\\
	\label{e.symmetry.breaking}
	&\limsup_{\delta\to 0} \limsup_{N\to\infty}
	\Big| \PP\big[\, \Norm{ \hh_N - \g_- }_{\Lsp^\infty([\frac{1}{R},1]\times[-R,R])} \leq \tfrac{1}{R} \, \big| \, \eventhh_{N,\delta}(1,0,\hv) \big] - \frac12\Big|
	=
	0,
\\
	\tag{\ref*{e.symmetry.breaking}'}
	&\limsup_{\delta\to 0} \limsup_{N\to\infty}
	\Big| \PP\big[\, \Norm{ \hh_N - \g_+ }_{\Lsp^\infty([\frac{1}{R},1]\times[-R,R])} \leq \tfrac{1}{R} \, \big| \, \eventhh_{N,\delta}(1,0,\hv) \big] - \frac12\Big|
	=
	0.
\end{align}
\end{conj}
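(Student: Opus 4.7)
The plan exploits the linearity of the SHE~\eqref{e.she}. Decompose $\ZZ=\ZZ_-+\ZZ_+$, where $\ZZ_\pm$ solves~\eqref{e.she} driven by the same noise $\noise$ with $\ZZ_\pm(0,\Cdot)=\delta_{\pm NT}$, and let $\hh_N^\pm$ denote the corresponding scaled Hopf--Cole heights defined as in~\eqref{e.hscaled}. Since $\log(a+b)=\max(\log a,\log b)+O(1)$,
\begin{align}
    \label{e.propose.max}
    \hh_N(t,x) = \max\bigl(\hh_N^-(t,x),\,\hh_N^+(t,x)\bigr) + O\bigl(1/(N^2T)\bigr),
\end{align}
uniformly in $(t,x)$. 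By translation invariance of~\eqref{e.she}, $\hh_N^\pm(t,x)$ has the same law as the narrow-wedge height $\hh_N(t,x\mp 1)$ from Theorem~\ref{t.main}, so each marginal is already understood: the hydrodynamic limit is $\parab_{\pm 1}$, and conditional on $\hh_N^\pm(1,0)\approx\hv$ the limit shape is $\mpss{,\pm 1}$ with rate $\ratekpz(\pm 1;1,0,\hv)$. The single-point LDP claim of the conjecture is then an immediate consequence of Theorem~\ref{t.main} applied to the two marginals, combined with the $x\mapsto-x$ symmetry giving $\ratekpz(-1;1,0,\hv)=\ratekpz(+1;1,0,\hv)$.

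For the symmetry-breaking claims, fix small $\e>0$ and split $\eventhh_{N,\delta}(1,0,\hv)$ (modulo the $O(1/(N^2T))$ error in~\eqref{e.propose.max}) into $\eventhh^-:=\eventhh_{N,\delta}(1,0,\hv)\cap\{\hh_N^+(1,0)\leq\hv-\e\}$, its mirror image $\eventhh^+$ (obtained under $x\mapsto-x$), and a ``tie'' event $\eventhh^{\mathrm{tie}}$ on which both $\hh_N^\pm(1,0)>\hv-\e$. The reflection symmetry of the initial condition gives $\PP[\eventhh^-]=\PP[\eventhh^+]$, so it suffices to prove (i) $\PP[\eventhh^{\mathrm{tie}}]/\PP[\eventhh^\pm]\to 0$ and (ii) on $\eventhh^-$, $\hh_N^-\to\mpss{,-1}$ and $\hh_N^+\to\parab_{+1}$ uniformly on $[\tfrac{1}{R},1]\times[-R,R]$. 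Given (i)--(ii), \eqref{e.propose.max} on $\eventhh^-$ yields $\hh_N\to\max(\mpss{,-1},\parab_{+1})=\g_-$, and symmetrically $\hh_N\to\g_+$ on $\eventhh^+$, establishing both displays.

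Both (i) and (ii) would follow from a joint upper-tail LDP for $(\hh_N^-(1,0),\hh_N^+(1,0))$ with the \emph{additive} rate function $\ratekpz(-1;1,0,\hv_-)+\ratekpz(+1;1,0,\hv_+)$: for (i), the minimum of this joint rate over $\hv_\pm\geq\hv-\e$ is $2\ratekpz(\pm 1;1,0,\hv-\e)$, which strictly exceeds the single-point rate $\ratekpz(\pm 1;1,0,\hv)$ once $\e$ is small; for (ii), the conditional law concentrates at the joint rate minimizer, which pins the ``free'' $+$ coordinate to its unique minimizer $\parab_{+1}(1,0)$, and passing through Theorem~\ref{t.main} on the $-$ side upgrades the one-point conditioning to uniform convergence on $[\tfrac{1}{R},1]\times[-R,R]$. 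The principal obstacle is establishing the joint LDP, which I would attack by extending the moment method and tree decomposition of Section~\ref{s.intro.moment} to the mixed moments $\EE[\ZZ_-(T,0)^{Nm_-}\,\ZZ_+(T,0)^{Nm_+}]$. The associated tree has two roots at $(0,\mp 1)$ (in scaled coordinates) and one leaf at $(1,0)$; the noise-corridor picture of Section~\ref{s.intro.mechanism} predicts that the $m_-$ mass travels along the left corridor and the $m_+$ mass along the right, with spatially disjoint supports for $t<1$. Consequently the Feynman--Kac Wick exponentials associated to the $-$ and $+$ Brownian-bridge families should decouple on the logarithmic scale, giving
\begin{align*}
    \tfrac{1}{N^3T}\log\EE\bigl[\ZZ_-^{Nm_-}\ZZ_+^{Nm_+}\bigr]
    =
    \tfrac{1}{N^3T}\log\EE\bigl[\ZZ_-^{Nm_-}\bigr]
    +
    \tfrac{1}{N^3T}\log\EE\bigl[\ZZ_+^{Nm_+}\bigr]
    + o(1),
\end{align*}
and the additive joint rate function via Legendre duality. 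Making this decoupling quantitative---essentially, adapting the localization input from \cite{tsai2023high} to a two-root configuration and controlling the between-corridor noise contributions---is the main technical challenge of the conjecture.
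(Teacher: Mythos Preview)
This statement is a \emph{conjecture}; the paper does not prove it. Appendix~\ref{s.a.symmetrybreaking} provides only supporting evidence via a moment computation, and that computation already answers---differently than you anticipate---the question you flag as the ``principal obstacle.'' By the time-reversal symmetry of $\WW_N$ (Section~\ref{s.conti.}), the mixed moment $\EE[\ZZ_{N,\alpha}(0,-1;1,0)^{N\mm_-}\,\ZZ_{N,\alpha}(0,+1;1,0)^{N\mm_+}]$ coincides with the standard two-point moment $\EE[\ZZ_{N,\alpha}(1,-1)^{N\mm_-}\,\ZZ_{N,\alpha}(1,+1)^{N\mm_+}]$ from a single delta at the origin. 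So no new two-root localization input is needed: Property~\ref{property.limit} gives the exponent as $\momshe(0\xrightarrow{1}((-1,+1),(\mm_-,\mm_+)))$, and Property~\ref{property.legendre} identifies its Legendre transform as $\ratekpz(1,(-1,+1),(\hv_-,\hv_+))$. Crucially, this exponent does \emph{not} factor as a sum of one-point exponents. In the variational problem~\eqref{e.momL} the two corridors are allowed to (and for large enough masses do) merge before reaching the origin, and the merged segment contributes $(\mm_-+\mm_+)^3/24$ rather than $(\mm_-^3+\mm_+^3)/24$; see~\eqref{e.symmetrybreaking}. Your picture of ``spatially disjoint supports for $t<1$'' presumes the straight-line configuration, which is not the optimizer in general, so the asymptotic factorization you display is false.

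Your overall architecture nonetheless survives once the additive joint rate is replaced by the correct one. Via time-reversal, the joint law of $(\hh_N^-(1,0),\hh_N^+(1,0))$ agrees with that of $(\hh_{N,\alpha}(1,-1),\hh_{N,\alpha}(1,+1))$ under a single delta, so Corollary~\ref{c.ldp.soften} (with Lemma~\ref{l.ldp.upb} for upper bounds off $\hvspacec$) already delivers the joint LDP with rate $\ratekpz(1,(-1,+1),\Cdot)$. The tie event has rate at least $\ratekpz(1,(-1,+1),(\hv-\e,\hv-\e))$, and forcing $\hf{}$ through a second point strictly above the parabola strictly increases the integral in~\eqref{e.ratekpz}, so this exceeds $\ratekpz(1,\pm1,\hv)$ for small $\e$. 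Together with Appendix~\ref{s.a.symmetrybreaking} this yields the first display of the conjecture (under~\eqref{e.scaling}) and your step~(i). Your step~(ii), however, does not reduce to Theorem~\ref{t.main}: the conditioning on $\eventhh^-$ couples $\hh_N^-$ and $\hh_N^+$ through the shared noise, and the time-reversal identifies only terminal-time laws, not the full spacetime fields. Showing that on $\eventhh^-$ the field $\hh_N^+$ stays at its hydrodynamic value $\parab_{+1}$---despite the noise being tilted so as to push $\hh_N^-$ up---is the genuine remaining gap, and is precisely why the paper leaves this as a conjecture.
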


In Appendix~\ref{s.a.symmetrybreaking}, we analyze the one-point moments under the two-delta(-like) initial condition.
The analysis shows that the moments are dominated by the contribution of either of the deltas, which support the symmetry breaking stated in \eqref{e.symmetry.breaking}.

\subsection{Literature}
\label{s.intro.literature}
Recently, there has been much interest in the LDPs of the KPZ equation in mathematics and physics.
Several strands of methods produce detailed information on the one-point tail probabilities and the one-point rate function.
This includes the physics works \cite{ledoussal16short,ledoussal16long,krajenbrink17short,sasorov17,corwin18,krajenbrink18half,krajenbrink18simple,krajenbrink18systematic,krajenbrink18simple,krajenbrink19thesis,krajenbrink19linear,ledoussal19kp}, the simulation works \cite{hartmann18,hartmann19,hartmann20,hartmann21}, and the mathematics works \cite{corwin20general,corwin20lower,cafasso21airy,das21,das21iter,kim21,lin21half,cafasso22,ghosal20,ganguly22,tsai22exact}.
For the Freidlin--Wentzell/weak-noise LDP, behaviors of the one-point rate function and the corresponding most probable shapes for various initial conditions and boundary conditions have been predicted \cite{kolokolov07, kolokolov08, kolokolov09, meerson16, kamenev16, meerson17, meerson18, smith18exact, smith18finitesize, asida19, smith19}, some of which have been proven \cite{lin21,lin22,gaudreaulamarre2023kpz}; an intriguing symmetry breaking and second-order phase transition has been discovered in~\cite{janas16,smith18} via numerical means and analytically derived in \cite{krajenbrink17short,krajenbrink22flat}; and a connection to integrable PDEs is established and explored in \cite{krajenbrink20det,krajenbrink21,krajenbrink22flat,tsai22,krajenbrink23}.
The one-point upper-tail of the KPZ equation and the SHE have been studied in \cite{chen15,corwin20general,das21,das21iter,lin21half} for large time or all time larger than any given positive threshold, and in \cite{gaudreaulamarre2023kpz} for short time.
For the nonlinear generalizations of the SHE, finite-time upper tails have been obtained in \cite{conus2013chaotic,chen2015moments,khoshnevisan17}.
The work \cite{ganguly22} gave detailed and sharp bounds on the $n$-point upper tails and estimates of the terminal-time limit shape, for all time larger than any given positive threshold. 

\subsection{Notation}
\label{s.intro.notation}
Let us introduce the various versions of $\ZZ$ and $\hh$ that we will work with.
Below, we will use the Feynman--Kac formula to introduce the $\ZZ$s, with $\X=\text{(standard BM)}+NTx$.
For any initial time $t_0\in[0,1]$ and initial location $x'\in\R$, 
\begin{align}
	\label{e.ZZ.ptp}
	\ZZ_{N}(t_0,x';t,x)
	&:=
	\Ebm\big[ e^{\int_{0}^{T(t-t_0)}\d s\, \noise(Tt-s,\X(s))} \delta_{NTx'}(\X(T(t-t_0))) \big],
\\
	\label{e.ZZ.soften}
	\ZZ_{N,\alpha}(t_0,x';t,x)
	&:=
	\Ebm\big[ e^{\int_{0}^{T(t-t_0)}\d s\, \noise(Tt-s,\X(s))} \ind_{[-\alpha+x',x'+\alpha]}(\tfrac{1}{NT}\X(T(t-t_0))) \big].
\end{align}
Namely, $\ZZ(Tt_0,NTx';t,x):=\ZZ_N(t_0,x';t/T,x/(NT))$ solves the SHE on $(t,x)\in[Tt_0,\infty)\times\R$ with the initial condition $\ZZ(Tt_0,NTx';Tt_0,\Cdot)=\delta_{NTx'}$.
The same holds for $\ZZ_{N,\alpha}(t_0,x';t,x)$, except that the initial condition is now changed to 
$
	\ZZ_{N,\alpha}(t_0,x';t_0,\Cdot) = \ind_{[-\alpha+x',x'+\alpha]},
$
which we call the \textbf{delta-like} initial condition.
Recall the noise corridors $\optimal_\cc$ from Sections~\ref{s.intro.ratefn}--\ref{s.intro.mechanism}.
Let $\Csp(\Omega)$ denote the space of real-valued continuous functions on $\Omega$.
Set
\begin{align}
	\Dist_{N,[0,s']}(f,\optimal)
	&:=
	\sup_{s\in[0,s']} \ \min_{\cc=1,\ldots,n} \big| \tfrac{1}{NT}f(Ts) - \optimal_\cc(s) \big|,
	\quad
	f\in \Csp[0,Ts'],
\\
	\label{e.ZZ.ptp.loc}
	\ZZ^{\betaloc}_{N}(t_0,x';t,x)
	&:=
	\Ebm\big[  \text{(same as \eqref{e.ZZ.ptp})}\,\ind_{\{ \Dist_{N,[0,t-t_0]}(\X,\optimal(\cdot+1-t)) \leq \beta\}} \big],
\\
	\label{e.ZZ.soften.loc}
	\ZZ^{\betaloc}_{N,\alpha}(t_0,x';t,x)
	&:=
	\Ebm\big[ \text{(same as \eqref{e.ZZ.soften})}\,\ind_{\{ \Dist_{N,[0,t-t_0]}(\X,\optimal(\cdot+1-t)) \leq \beta\}} \big].
\end{align}
These $\beta$-localized $\ZZ$s have their BMs stay within distance $\beta$ (in the post-scale units) from the noise corridors.
When $(t_0,x')=(0,0)$, we write
\begin{align}
	\label{e.ZZ.omit.initial}
	&\ZZ_{N}(t,x) := \ZZ_{N}(0,0;t,x),
	\text{ and the same for } \ZZ_{N,\alpha}, \ZZ^{\betaloc}_{N} , \ZZ^{\betaloc}_{N,\alpha},
\\
	\label{e.hhN}
	&\hh_{N}(t,x) := \tfrac{1}{N^2T}(\log\ZZ_{N}(t,x)+\log\sqrt{T}),
	\qquad
	\hh_{N,\alpha}(t,x) := \tfrac{1}{N^2T}\log\ZZ_{N,\alpha}(t,x),
\end{align}
which is consistent with \eqref{e.hscaled}.
Recall from Section~\ref{s.intro.scaling} that $\log\sqrt{T}$ counters the contribution from the delta initial condition, so is not needed for $\hh_{N,\alpha}$.

Throughout the paper, $\alpha$ denotes be width of the delta-like initial condition, $\delta$ denotes the parameter in the event \eqref{e.eventhh} and similar events, $N$ is the scaling parameter,
\begin{align}
	&A\lesssim B \text{ means } \limsup_{\delta\to 0} \limsup_{\alpha\to 0} \limsup_{N\to\infty} (N^3T)^{-1}\log(A/B) \leq 0,
\\
	&A\ll B \text{ means } \limsup_{\delta\to 0} \limsup_{\alpha\to 0} \limsup_{N\to\infty} (N^3T)^{-1}\log(A/B) < 0,
\end{align}
and $A\sim B$ means $A\lesssim B$ and $B\lesssim A$.

\subsection*{Outline}
In Section~\ref{s.momt}, we recall the relevant results from \cite{tsai2023high} on the moments of the SHE and use them to obtain the $n$-point LDP.
In Section~\ref{s.shape}, we prove that the KPZ height function follows the limit shape at any given point in $(0,1]\times\R$, up to an exponentially small probability.
In Section~\ref{s.conti}, we develop certain continuity estimates, which turns the pointwise result in Section~\ref{s.shape} to a result that holds on any compact subset of $(0,1]\times\R$, whereby completing the proof of Theorem~\ref{t.main}.
For technical reasons,  we will consider $\ZZ_{N,\alpha}$ and $\hh_{N,\alpha}$ instead of $\ZZ_{N}$ and $\hh_{N}$ in Sections~\ref{s.momt}--\ref{s.shape}.
Then, in Section~\ref{s.conti}, we use the continuity estimates to approximate the latter by the former.

\subsection*{Acknowledgments}
The work was initiated during ``PNW Integrable Probability Conference'' at Oregon State University.
We thank the conference organizer, Axel Saenz, for their hospitality.
We thank Sayan Das for helpful discussions.
The research of Tsai was partially supported by the NSF through DMS-2243112 and the Sloan Foundation Fellowship.

\section{Moments and $n$-point LDP}
\label{s.momt}
Here we introduce some properties of the moments of the SHE and use them to obtain the LDP.

The relevant initial condition for the SHE in Theorem~\ref{t.main} is the delta initial condition, but for technical reasons we will first consider the delta-like initial condition.
Namely, we will work with $\ZZ_{N,\alpha}$ and $\hh_{N,\alpha}$ hereafter until the end of Section~\ref{s.shape}.
Later, in Section~\ref{s.conti.tmain}, we will argue that they well approximate $\ZZ_{N}$ and $\hh_{N}$.

\subsection{Results from \cite{tsai2023high}}
\label{s.momt.tsai2023high}
Let us recall the relevant results from \cite{tsai2023high}.
For $\vecxx=(\xx_1<\ldots<\xx_n)$ and $\vecmm = (\mm_\cc)_{\cc=1}^n \in [0,\infty)^n$, the moment Lyapunov exponents of interest is the limit of 
\begin{align}
	\label{e.momentL}
	\frac{1}{N^3T} \log \EE\Big[ \prod_{\cc=1}^n Z_{N,\alpha}(t_0,\xxi; t,\xx_\cc)^{N\mm_\cc} \Big],
\end{align}
with $N\to\infty$ first (with $T=T_N$ and under \eqref{e.scaling}) and $\alpha\to 0$ later.
The results from \cite{tsai2023high} are for integer moments, so an integer part is implicitly taken whenever needed; for example, $N\mm_\cc := \lceil N\mm_\cc\rceil$ in \eqref{e.momentL}.
As will be seen later, the limit of \eqref{e.momentL} is continuous in $\vecmm\in[0,\infty)^n$, so obtaining the limit for $\lceil N\mm_\cc\rceil\in(\Z_{> 0})^n$ automatically gives the limit for $ N\mm_\cc\in [0,\infty)^n$.

Describing the limit of \eqref{e.momentL} requires considering measure-valued functions.
Set $\mm=\mm_1+\ldots+\mm_n$ and let $(\mm\Psp(\R))$ denote the space of positive Borel measures on $\R$ with total mass $\mm$.
For $\lambda\in\mm\Psp(\R)$, write $\int_\R\lambda(\d x)\, \f(x)=:\ip{\lambda,\f}$ and let $\quant[\lambda](a):=\inf\{x\in\R: \ip{\lambda,\ind_{(-\infty,x]}}\geq a\}$ denote the quantile function or inverse CDF.
Endow the space $\mm\Psp(\R)$ with the weak* topology, and, for $ \mu\in\Csp([s',s''],\mm\Psp(\R)) $, set
\begin{align}
	\label{e.mom}
	\mom_{[s',s'']}(\mu)
	:=
	\int_{s'}^{s''} \d s\, \Big(\sum_{x} \frac{1}{24} \ip{ \mu, \ind_{\{x\}} }^3 - \int_0^{\mm} \d a \, \frac12 \big( \partial_s \quant[\mu] \big)^2\Big).
\end{align}
Hereafter, we adopt shorthand notation such as $\mu:=\mu(s)$ and $\quant[\mu]:=\quant[\mu(s)](a)$.
In \eqref{e.mom}, the sum runs over all atoms of $\mu(s)$, and the integral is interpreted as $\infty$ when $\partial_s\quant[\mu]\notin\Lsp^2([s',s'']\times[0,\mm])$. 
Let us briefly explain how the above objects are related to \eqref{e.momentL}. 
With the aid of the Feynman--Kac formula, one can express the moment in \eqref{e.momentL} in terms of the exponential moments of the localtimes of $N\mm$ independent BMs.
The limit of \eqref{e.momentL} is then controlled by the large deviations of these BMs and their localtimes.
An element $ \mu\in\Csp([s',s''],\mm\Psp(\R)) $ describes a deviation of the empirical measure of the BMs.
Note that $\mu=\mu(s)$ uses the \emph{backward} time $s$ since the Feynman--Kac formula involves a time reversal; see~\eqref{e.feynmankac}.
The quantity~\eqref{e.mom} encodes the contribution of the localtimes (in the first term in the integral) and of the sample-path LDP for the BMs (in the second term). 

The limit of \eqref{e.momentL} is described by
\begin{align}
	\label{e.momL}
	\momshe\big(\xxi\xrightarrow{\termt} (\vecxx,\vecmm)\big)
	:=
	\sup\Big\{ \mom_{[0,\termts]}(\mu) \ : \ \mu(0)=\sum_{\cc=1}^n \mm_\cc \delta_{\xx_\cc}, \ \  \mu(\termts)=\mm\delta_{\xxi} \Big\},
\end{align}
where $\mm=\mm_1+\ldots+\mm_n$.
In light of the time reversal mentioned in the last paragraph, $\mu(0)=\sum_{\cc=1}^n \mm_\cc \delta_{\xx_\cc}$ corresponds to the points probed in \eqref{e.momentL} at time $t=\termt+t_0$, while $\mu(\termt)=\mm\delta_{\xxi}$ corresponds to the initial condition of $\ZZ_{N,\alpha}$ at time $t_0$.

The following properties are proven in \cite[Cor.\ 2.4 and Thm.\ 2.5]{tsai2023high}.
Recall $\lesssim$, $\ll$, and $\sim$ from Section~\ref{s.intro.notation}.
\begin{enumerate}[leftmargin=20pt, label=(\Alph*)]
\item \label{property.limit}
$
	\eqref{e.momentL} \sim \momshe\big(\xxi\xrightarrow{t-t_0} (\vecxx,\vecmm)\big)
$
\item \label{property.legendre}
For fixed $\vecxx$ and $t$, view $\ratekpz(t,\vecxx,\vechv)=:\ratekpz(\vechv)$ as a function on $\hvspacec(t,\vecxx)$ and view $\momshe(0\xrightarrow{t} (\vecxx,\vecmm))=:\momshe(\vecmm)$ as a function on $[0,\infty)^n$.
These functions are continuous, strictly convex, and the Legendre transform of each other.
Further, $\nabla\ratekpz=\nabla_{\vechv}\ratekpz:\hvspacec(t,\vecxx)\to[0,\infty)^n$ is a homeomorphism, so the following relation characterizes Legendre dual variables $(\vechv,\vecmm)$:
\begin{align}
	\label{e.legendre.dual}
	(\nabla_{\vechv}\ratekpz)(t,\vecxx,\vechv) = (\nabla\ratekpz)(\vechv) = \vecmm.
\end{align}
\item[\mylabel{property.intermediate.}{\ref*{property.intermediate}'}]
Let us prepare the notation for Property~\ref{property.intermediate}.
Fix $\vecxx\in\R^n$, and consider a Legendre pair $(\vechv,\vecmm)$ as in \eqref{e.legendre.dual} at time $1$, namely $(\nabla_{\vechv}\ratekpz)(1,\vecxxi,\vechvi)=\vecmmi$.
Take any intermediate time $\intermt\in(0,1]$.
Evolving in backward time, some of the noise corridors may have merged by time $s=1-\intermt$.
Let $\{\optimal_\cc(1-\intermt)\}_{\cc=1}^n = \{ \xxi_{1}<\ldots<\xxi_{n'} \}$ denote the distinct positions of the clusters at that time.
Accordingly, let $\vechvi := (\mps(\intermt,\xxi_{\aa}))_{\aa=1}^{n'}$, $\Aranch(\aa):=\{\cc:\optimal_\cc(1-\intermt)=\xxi_\aa\}$, $\mmi_\aa:=\sum_{\cc\in\Aranch(\aa)} \mm_{\cc}$, and $\vecmmi:=(\mmi_\aa)_{\aa=1}^{n'}$; see Figure~\ref{f.intermt} for an illustration.
\item 
\label{property.intermediate}
Notation as in \ref{property.intermediate.}. 
\begin{align}
	\label{e.legendre.dual.intermediate}
	(\nabla_{\vechv}\ratekpz)(\intermt,\vecxxi,\vechvi) & = \vecmmi,
\\
	\label{e.treesg}
	\momshe\big( 0 \xrightarrow{1} (\vecxx,\vecmm) \big)
	&=
	\momshe\big( 0 \xrightarrow{\intermt} (\vecxxi,\vecmmi) \big)
	+
	\sum_{\aa=1}^{n'} \momshe\big( \xxi_\aa \xrightarrow{1-\intermt} (\xx_\cc,\mm_\cc)_{\cc\in\Aranch(\aa)} \big).
\end{align}
\item \label{property.loc}
Recall $\ZZ_{N,\alpha}$ and $\ZZ^{\betaloc}_{N,\alpha}$ from \eqref{e.ZZ.omit.initial}.
For any nonempty $A\subset\{1,\ldots,n\}$ and $\beta>0$,
\begin{align*}
	\EE\Big[ 
		\prod_{\cc\in A} \big(\ZZ_{N,\alpha}-\ZZ^{\betaloc}_{N,\alpha}\big)(1,\xx_\cc)^{N\mm_\cc} 
		\cdot \prod_{\cc\notin A} \ZZ_{N,\alpha}(1,\xx_\cc)^{N\mm_\cc} 
	\Big]
	\ll
	\exp\big( N^3T\momshe\big(0\xrightarrow{1} (\vecxx,\vecmm)\big)\big).
\end{align*}
\end{enumerate}

\subsection{From moments to $n$-point LDP}
\label{s.momt.ldp}
Properties~\ref{property.limit}--\ref{property.legendre} immediately give the $n$-point LDP for $\hh_{N,\alpha}$.
Recall $\hh_{N,\alpha}(t,x)$ from \eqref{e.ZZ.omit.initial} and let
\begin{align}
	\label{e.eventhh.}
	\eventhh_{N,\alpha,\delta}(t,\vecxx,\vechv)
	:=
	\big\{ |\hh_{N,\alpha}(t,\xx_\cc)-\hv_\cc|\leq \delta, \cc=1,\ldots,n \big\}.
\end{align}
Since $\ZZ_{N,\alpha}(t,x)=\exp(N^2T\hh_{N,\alpha}(t,x))$, Property~\ref{property.limit} gives the limit of the $n$-point log moment generating function of $\hh_{N,\alpha}(t,\Cdot)$.
Further, by Property~\ref{property.legendre}, the limit is strictly convex in $\vecmm\in\hvspacec(t,\vecxx)$.
Hence the $n$-point LDP of $\hh_{N,\alpha}(t,\Cdot)$ follows.
\begin{cor}\label{c.ldp.soften}
For any $t\in(0,1]$, $\vecxx=(\xx_1<\ldots<\xx_n)$, and $\vechv\in\hvspacec(t,\vecxx)^\circ$,
\begin{align}
	\label{e.c.ldp.soften}
	\PP[\eventhh_{N,r,\delta}(t,\vecxx,\vechv)] \sim \exp\big( -N^3T\ratekpz(t,\vecxx,\vechv)\big).
\end{align}
\end{cor}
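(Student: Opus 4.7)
I would prove~\eqref{e.c.ldp.soften} via the standard G\"artner--Ellis recipe: Property~\ref{property.limit} supplies the log moment generating function asymptotics of $\hh_{N,\alpha}(t,\vecxx)$, Property~\ref{property.legendre} supplies strict convexity and the Legendre duality, and together they give the $n$-point LDP. Throughout, abbreviate $\ratekpz(\vechv):=\ratekpz(t,\vecxx,\vechv)$, $\momshe(\vecmm):=\momshe(0\xrightarrow{t}(\vecxx,\vecmm))$, and $|\vecmm|:=\sum_{\cc=1}^n\mm_\cc$.

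\emph{Upper bound.} For any $\vecmm\in[0,\infty)^n$, since each $\mm_\cc\geq 0$ and $\hh_{N,\alpha}(t,\xx_\cc)\geq\hv_\cc-\delta$ on $\eventhh_{N,\alpha,\delta}$, the Chernoff-type bound
\begin{align*}
	\PP[\eventhh_{N,\alpha,\delta}]
	\;\leq\; \exp\!\big(-N^3T(\ip{\vecmm,\vechv}-\delta|\vecmm|)\big)\cdot \EE\Big[\textstyle\prod_\cc \ZZ_{N,\alpha}(t,\xx_\cc)^{N\mm_\cc}\Big]
\end{align*}
holds, after using $\ZZ_{N,\alpha}^{N\mm_\cc}=\exp(N^3T\mm_\cc\hh_{N,\alpha})$. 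Sending $N\to\infty$ and $\alpha\to 0$ via Property~\ref{property.limit}, choosing $\vecmm=\vecmm^*:=\nabla\ratekpz(\vechv)$ (the unique Legendre maximizer, by Property~\ref{property.legendre}), and using the Legendre identity $\momshe(\vecmm^*)-\ip{\vecmm^*,\vechv}=-\ratekpz(\vechv)$ yields
\begin{align*}
	\limsup_{\alpha\to 0}\limsup_{N\to\infty}(N^3T)^{-1}\log\PP[\eventhh_{N,\alpha,\delta}]\;\leq\;-\ratekpz(\vechv)+\delta|\vecmm^*|.
\end{align*}
Letting $\delta\to 0$ gives the upper half of~\eqref{e.c.ldp.soften}.

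\emph{Lower bound.} Define the tilted measure $\d\til{\PP}_N/\d\PP:=Z_N^{-1}\exp(N^3T\ip{\vecmm^*,\hh_{N,\alpha}(t,\vecxx)})$ with $Z_N:=\EE[\exp(N^3T\ip{\vecmm^*,\hh_{N,\alpha}(t,\vecxx)})]$. On $\eventhh_{N,\alpha,\delta}$ one has $\ip{\vecmm^*,\hh_{N,\alpha}(t,\vecxx)}\leq\ip{\vecmm^*,\vechv}+\delta|\vecmm^*|$, so
\begin{align*}
	\PP[\eventhh_{N,\alpha,\delta}]\;\geq\;Z_N\cdot e^{-N^3T(\ip{\vecmm^*,\vechv}+\delta|\vecmm^*|)}\cdot\til{\PP}_N[\eventhh_{N,\alpha,\delta}].
\end{align*}
By Property~\ref{property.limit}, $Z_N\sim\exp(N^3T\momshe(\vecmm^*))$, so the first two factors combine to $\exp(N^3T(-\ratekpz(\vechv)-\delta|\vecmm^*|))$; it therefore suffices to show $\til{\PP}_N[\eventhh_{N,\alpha,\delta}]$ does not decay exponentially.

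\emph{Main obstacle.} The concentration $\til{\PP}_N[\eventhh_{N,\alpha,\delta}]\to 1$ under the tilt is the most delicate step. By Property~\ref{property.limit}, the limiting log MGF under $\til{\PP}_N$ is $\til\Lambda(\vecmm):=\momshe(\vecmm+\vecmm^*)-\momshe(\vecmm^*)$, defined on the admissible cone $\{\vecmm:\vecmm+\vecmm^*\geq 0\}$; by Property~\ref{property.legendre} it is strictly convex with $\nabla\til\Lambda(0)=\vechv$, so its Legendre transform is strictly positive away from $\vechv$. Reapplying the Chernoff argument inside $\til{\PP}_N$ then gives exponential decay of $\til{\PP}_N[|\hh_{N,\alpha}(t,\xx_\cc)-\hv_\cc|>\delta]$ for each $\cc$, provided one can tilt in both directions $\pm e_\cc$. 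Tilting by $+e_\cc$ is always admissible (controlling the upper tail at $\xx_\cc$); tilting by $-e_\cc$ requires $\mm_\cc^*>0$. For $\vechv\in\hvspacec^\circ$, the homeomorphism $\nabla\ratekpz$ in Property~\ref{property.legendre} sends the interior into $(0,\infty)^n$, so $\vecmm^*\in(0,\infty)^n$ and both tilt directions are admissible, completing the concentration and hence the lower bound.
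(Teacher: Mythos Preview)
Your proof is correct and follows the same approach the paper sketches in the paragraph preceding the corollary: use Property~\ref{property.limit} to obtain the limiting log moment generating function and Property~\ref{property.legendre} for strict convexity and the Legendre duality, then read off the LDP. The paper simply invokes these two properties without spelling out the Chernoff upper bound or the change-of-measure lower bound; you have carried out those details explicitly, including the key observation that the homeomorphism $\nabla\ratekpz:\hvspacec\to[0,\infty)^n$ sends the interior into $(0,\infty)^n$, which is what allows the tilted Chernoff argument in both directions despite $\momshe$ only being defined on the orthant.
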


Next, let us state an LDP upper bound that holds for all $\vechv\in\R^n$.
The bound will serve as a technical tool in Section~\ref{s.shape.corridors}, is suboptimal on $\R^{n}\setminus\hvspacec$, but suffices for our purpose.
Recall $\hf{}=\hf{,t,\vecxx,\vechv}$ from Section~\ref{s.intro.ratefn} and recall that $\parab(t,x):=-x^2/(2t)$.
Consider the convex-hull analog of it: Let $\hfc{}=\hfc{,t,\vecxx,\vechv}\in\Csp(\R)$ be the function whose hypograph $\{(x,r): r\leq \hfc{}(x),x\in\R\}$ is the convex hull of $\{(x,\parab(t,x))\}_{x\in\R}\cup\{(\xx_\cc,\hv_\cc)\}_{\cc=1}^n$.
Indeed, when $\hv\in\hvspacec(t,\vecxx)$, $\hf{,t,\vecxx,\vechv}=\hfc{,t,\vecxx,\vechv}$, but they differ when $\hv\notin\hvspacec(t,\vecxx)$.
Set
\begin{align}
	\label{e.ratekpz.extended}
	\big(\ratekpz\circ\hfc{}\big)(t,\vecxx,\vechv)
	:=
	\ratekpz\big(t,\vecxx,(\hfc{,t,\vecxx,\vechv}(t,\xx_\cc))_{\cc=1}^n\big).
\end{align}
\begin{lem}\label{l.ldp.upb}
Fix $t$ and $\vecxx$.
\begin{enumerate}[leftmargin=20pt, label=(\alph*)]
\item \label{l.ldp.upb.1}
Take any Legendre-dual variables $(\vechv,\vecmm)$ in $\hvspacec(t,\vecxx)^\circ\times(0,\infty)^n$.
The map $\vec{r}\mapsto(\vec{r}\cdot\vecmm - (\ratekpz\circ\hfc{})(t,\vecxx,\vec{r}))$ is concave and continuous on $\R^n$ and has a unique maximum at $\vec{r}=\vechv$.
\item \label{l.ldp.upb.2}
For any closed $F\subset\R^n$,
\begin{align}
	\label{e.l.ldp.upb.2}
	\PP\big[ (\hh_{N,\alpha}(t,\vecxx_\cc))_{\cc=1}^n \in F \big]
	\lesssim
	\exp\Big( -N^3T \, \inf_F \big(\ratekpz\circ\hfc{}\big)\Big).
\end{align}
\end{enumerate}
\end{lem}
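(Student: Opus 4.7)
The plan is to recast $\ratekpz\circ\hfc{}$ as the Legendre transform of $\momshe$ extended from $\hvspacec^\circ$ to all of $\R^n$ and then exploit this identification.  Part (a) follows formally from the convex-duality properties in Property~\ref{property.legendre}, and part (b) is a Chebyshev-with-moments argument using Property~\ref{property.limit} together with a compactification of $F$.

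For (a), set $J(\vec{r}) := \sup_{\vecmm\in[0,\infty)^n}(\vec{r}\cdot\vecmm - \momshe(\vecmm))$, which is convex and lower semicontinuous as a supremum of affine functionals.  I claim $J = \ratekpz\circ\hfc{}$ on $\R^n$.  Write $\vec{s}:=(\hfc{,t,\vecxx,\vec{r}}(\xx_\cc))_{\cc=1}^n\in\hvspacec$.  The inequality $J\leq \ratekpz\circ\hfc{}$ is immediate from Property~\ref{property.legendre}: for every $\vecmm\geq 0$, Legendre duality together with $\vec{s}\geq \vec{r}$ gives $\ratekpz(\vec{s})\geq \vec{s}\cdot\vecmm - \momshe(\vecmm)\geq \vec{r}\cdot\vecmm - \momshe(\vecmm)$.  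For the reverse inequality, call an index $\cc$ \emph{inactive} if $s_\cc > r_\cc$; at such a vertex the piecewise-linear function $\hf{,t,\vecxx,\vec{s}}$ has matching left and right slopes, and a direct computation with \eqref{e.ratekpz} shows that perturbing $s_\cc$ changes the integrand only by $O((\delta s_\cc)^2)$, so $\partial_{s_\cc}\ratekpz(\vec{s}) = 0$.  Taking $\vecmm^* := \nabla\ratekpz(\vec{s})\in[0,\infty)^n$, the inactive/active dichotomy yields $\vec{r}\cdot\vecmm^* = \vec{s}\cdot\vecmm^*$, whence $J(\vec{r})\geq \vec{s}\cdot\vecmm^* - \momshe(\vecmm^*) = \ratekpz(\vec{s})$.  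With $J = \ratekpz\circ\hfc{}$ established, convexity and (via finiteness) continuity on $\R^n$ follow, so $\vec{r}\mapsto \vec{r}\cdot\vecmm - (\ratekpz\circ\hfc{})(\vec{r})$ is concave and continuous.  For the uniqueness of the maximizer: on $\hvspacec$, strict concavity of $\vec{r}\mapsto \vec{r}\cdot\vecmm - \ratekpz(\vec{r})$ (Property~\ref{property.legendre}) forces the unique maximizer to be the Legendre-dual point $\vechv$; for $\vec{r}\notin\hvspacec$, using $\vec{s}\geq \vec{r}$, $\vec{s}\neq \vec{r}$, and $\vecmm\in(0,\infty)^n$,
\begin{align*}
\vec{r}\cdot\vecmm - \ratekpz(\vec{s})
\ <\ \vec{s}\cdot\vecmm - \ratekpz(\vec{s})
\ \leq\ \vechv\cdot\vecmm - \ratekpz(\vechv).
\end{align*}

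For (b), fix $\vec{r}\in\R^n$, $\delta>0$, and $\vecmm\geq 0$.  Exponential Chebyshev combined with Property~\ref{property.limit} gives
\begin{align*}
\PP\bigl[\hh_{N,\alpha}(t,\xx_\cc)\geq r_\cc-\delta,\,\forall\cc\bigr]
\ \leq\ \frac{\EE\bigl[\prod_\cc \ZZ_{N,\alpha}(t,\xx_\cc)^{N\mm_\cc}\bigr]}{\exp\!\bigl(N^3T(\vec{r}-\delta\mathbf{1})\cdot\vecmm\bigr)}
\ \lesssim\ \exp\!\bigl(-N^3T\bigl(\vec{r}\cdot\vecmm - \momshe(\vecmm) - \delta|\vecmm|_1\bigr)\bigr),
\end{align*}
and optimizing over $\vecmm$ yields the box bound $\PP[|\hh_{N,\alpha}(t,\xx_\cc) - r_\cc|\leq\delta,\,\forall\cc]\lesssim \exp(-N^3T(J(\vec{r}) - O(\delta)))$.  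Given closed $F\subset\R^n$, cover $F\cap[-R,R]^n$ by $O(\delta^{-n})$ such boxes and union-bound; the polynomial prefactor is invisible at the scale $(N^3T)^{-1}\log(\cdot)$.  The tail $F\setminus[-R,R]^n$ is handled by exponential tightness: the upper tails of $\hh_{N,\alpha}(t,\xx_\cc)$ are killed by the same Markov estimate with $\vecmm=\varepsilon\vec{e}_\cc$ for a fixed small $\varepsilon>0$, while the lower tails follow from standard one-point lower-tail estimates for $\ZZ_{N,\alpha}$.  Letting $\delta\to 0$ and then $R\to\infty$, and using the continuity of $J=\ratekpz\circ\hfc{}$ from (a), upgrades the pointwise bound to $\inf_F(\ratekpz\circ\hfc{})$, proving \eqref{e.l.ldp.upb.2}.

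The step I expect to be most delicate is the identification $J = \ratekpz\circ\hfc{}$ at points $\vec{r}\notin\hvspacec$, because the supremum defining $J(\vec{r})$ is attained on the boundary $\partial[0,\infty)^n$ rather than in the interior.  Making the ``inactive vertex $\leftrightarrow$ zero dual coordinate'' correspondence precise requires careful use of one-sided derivatives of $\ratekpz$ at boundary points of $\hvspacec$, with some bookkeeping with \eqref{e.ratekpz} when several vertices are simultaneously inactive; one may prefer to argue directly from the piecewise-linear formula for $\hf{}$ rather than via differentials.  A secondary, more routine, hurdle is lower-tail exponential tightness, which should follow from standard negative-moment estimates for $\ZZ_{N,\alpha}$ but is not isolated in the paper.
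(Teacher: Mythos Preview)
Your approach is sound and the acknowledged gaps are real but fillable; the route, however, differs from the paper's in both parts, and the comparison is instructive.

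For \ref{l.ldp.upb.1}, you identify $\ratekpz\circ\hfc{}$ with the full Legendre transform $J(\vec r)=\sup_{\vecmm\ge0}(\vec r\cdot\vecmm-\momshe(\vecmm))$ and read off convexity and continuity from that.  The paper instead proves convexity directly from the convex-hull geometry: for $\hfc{,1}:=u\,\hfc{,t,\vecxx,\vechv}+(1-u)\,\hfc{,t,\vecxx,\vechvi}$ and $\hfc{,2}:=\hfc{,t,\vecxx,u\vechv+(1-u)\vechvi}$ one has $\hfc{,1}\ge\hfc{,2}$ pointwise, and then an easy monotonicity of $\ratekpz$ under pointwise ordering of concave majorants yields the inequality.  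Your route is more conceptual and produces the extra identity $J=\ratekpz\circ\hfc{}$, which is independently useful; the paper's is more elementary and sidesteps the boundary-gradient issue you flag.  That issue is genuine but resolves exactly as you anticipate: the slope computation from \eqref{e.ratekpz} gives $\partial_{s_\cc}\ratekpz(\vec s)=(\text{left slope})-(\text{right slope})$ at $\xx_\cc$, which vanishes precisely when $\cc$ is inactive, and Property~\ref{property.legendre} asserts $\nabla\ratekpz$ is defined on all of $\hvspacec(t,\vecxx)$, not just its interior.

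For \ref{l.ldp.upb.2}, the divergence is sharper.  You run the textbook G\"artner--Ellis argument: cover $F\cap[-R,R]^n$ by boxes, Chebyshev each box, and invoke upper- and lower-tail exponential tightness for the remainder.  The paper truncates only from \emph{above}, to $F':=F\cap(-\infty,R]^n$, and then partitions $F'$ by the active index set $C=\mathrm{Redu}(\vechv):=\{\cc:\hfc{,t,\vecxx,\vechv}(\xx_\cc)=\hv_\cc\}$.  The point is that the projection $F'_C:=\{\vechv_C:\vechv\in F',\ \mathrm{Redu}(\vechv)=C\}$ lands inside $\hvspacec(t,\vecxx_C)$, which is automatically bounded below by the parabola values $\parab(t,\xx_\cc)$; hence $F'_C$ is compact and Corollary~\ref{c.ldp.soften} applies directly on the reduced coordinates.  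This reduction is the paper's key device: by dropping inactive coordinates it \emph{eliminates the need for lower-tail tightness altogether}.  Your lower-tail step can be supplied from \eqref{e.conti.pt-to-pt.lw} together with \eqref{e.soften.approx}, but those appear only in Section~\ref{s.conti}, so citing them here is logically out of order (though not circular).  In short: your argument is the standard one and works once lower-tail input is imported; the paper's is more self-contained and exploits the specific structure of $\hfc{}$ that your part~\ref{l.ldp.upb.1} analysis already uncovers in the form ``inactive $\Leftrightarrow$ zero dual coordinate''.
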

\noindent{}
Lemma~\ref{l.ldp.upb} is proven in Appendix~\ref{s.a.tools}.

\section{Limit shape at one point}
\label{s.shape}

Given the $n$-point LDP in Corollary~\ref{c.ldp.soften}, we move on to the limit shape.
Hereafter, we fix $\vecxx$ and $\vechv\in\hvspacec(1,\vecxx)^\circ$.
Recall $\eventhh_{N,\alpha,\delta}(1,\vecxx,\vechv)$ from \eqref{e.eventhh.}.
With $\vecxx$ and $\vechv$ being fixed, hereafter we write $\eventhh_{N,\alpha,\delta}(1,\vecxx,\vechv)=\eventhh_{N,\alpha,\delta}$. 
The goal of this section is to prove the proposition.
\begin{prop}\label{p.shape}
Notation as above.
For any $(t,x)\in(0,1]\times\R$ and $R\in(0,\infty)$,
\begin{align}
	\label{e.p.shape}
	\PP \big[ \big\{ |\hh_{N,\alpha}(t,x)-\mps(t,x)| \geq \tfrac{1}{R} \big\} \, \big| \, \eventhh_{N,\alpha,\delta} \big]
	\ll
	\exp\big( - N^3T \cdot 0 \big) = 1.
\end{align}
\end{prop}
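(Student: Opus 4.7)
The target is a nontrivial exponential concentration: conditional on $\eventhh_{N,\alpha,\delta}$, a macroscopic deviation of $\hh_{N,\alpha}(t,x)$ from the limit shape $\mps(t,x)$ has probability $\ll 1$, i.e.\ decays like $\exp(-c\, N^3T)$ in the sense of Section~\ref{s.intro.notation}. The plan is to prove the estimate first at a point $(\intermt,\xxi_\aa)$ lying on a noise corridor via a Markov-type inequality fed by the tree decomposition \eqref{e.treesg}, and then to propagate the estimate to an arbitrary $(t,x)\in(0,1]\times\R$ using the characteristic geometry of $\mps$ (as partitioned in Section~\ref{s.shape.outcorridors}).

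\textbf{Stage 1 (corridor points).} Write the conditional probability as $\PP[A\cap\eventhh_{N,\alpha,\delta}]/\PP[\eventhh_{N,\alpha,\delta}]$, where $A=\{|\hh_{N,\alpha}(\intermt,\xxi_\aa)-\hvi_\aa|\geq 1/R\}$ and $\hvi_\aa=\mps(\intermt,\xxi_\aa)$. Corollary~\ref{c.ldp.soften} gives $\PP[\eventhh_{N,\alpha,\delta}]\sim\exp(-N^3T\,\ratekpz(\vechv))$, so it suffices to prove that $\PP[A\cap\eventhh_{N,\alpha,\delta}]\ll\exp(-N^3T\,\ratekpz(\vechv))$, i.e.\ a strict gap after dividing. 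I would split $A=A_+\cup A_-$ according to the sign of the deviation. On the upper branch $A_+$, introduce an $(n{+}1)$-point probe: for a parameter $\lambda>0$, Markov's inequality gives
\begin{align*}
\PP[A_+\cap\eventhh_{N,\alpha,\delta}]
\;\leq\;
\exp\!\big(-N^3T\big(\lambda(\hvi_\aa+\tfrac1R)+\vecmm\cdot\vechv-O(\delta(\lambda+|\vecmm|))\big)\big)\,
\EE\!\Big[\ZZ_{N,\alpha}(\intermt,\xxi_\aa)^{N\lambda}\textstyle\prod_\cc\ZZ_{N,\alpha}(1,\xx_\cc)^{N\mm_\cc}\Big].
\end{align*}
By Property~\ref{property.limit} the $(n{+}1)$-point moment on the right is $\sim \exp\!\big(N^3T\,\momshe(0\xrightarrow{1}(\vecxx,\vecmm)\sqcup(\xxi_\aa,\lambda)_{\text{at time }\intermt})\big)$, where the bracketed quantity is the natural analogue of \eqref{e.momL} with an inserted measure-valued constraint at time $1-\intermt$. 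The tree decomposition \eqref{e.treesg}, together with Property~\ref{property.legendre} and the Legendre relation \eqref{e.legendre.dual.intermediate}, implies that the saddle point is $\lambda=\mmi_\aa$, at which the inserted probe sits exactly on the existing branch at $(\intermt,\xxi_\aa)$ and contributes nothing extra to the rate. Thus at the optimal $\lambda$, after matching $\ratekpz$ via Legendre duality and sending $\delta\to 0$, the surviving term is $-\lambda/R=-\mmi_\aa/R<0$, yielding the desired exponential decay. The lower branch $A_-$ is handled symmetrically; here Property~\ref{property.loc} is used to discard Feynman--Kac paths that drift far from the corridor, so that forcing $\hh_{N,\alpha}(\intermt,\xxi_\aa)$ below $\hvi_\aa-1/R$ necessarily forfeits the positive contribution required to hit $\hh_{N,\alpha}(1,\xx_\cc)\approx \hv_\cc$ for $\cc\in\Aranch(\aa)$.

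\textbf{Stage 2 (off-corridor propagation).} For general $(t,x)$, use the structure of $\mps$ recalled after Figure~\ref{f.shocks}: outside the cones $\mps=\parab$, and inside each cone $\mps$ is piecewise linear with kinks exactly along the corridors. I would transport control from corridor points to $(t,x)$ by moving along characteristics, since $\partial_x\mps$ is constant along them and the increment of $\hh_{N,\alpha}$ along a short characteristic segment is controlled by a one-point moment bound of Lemma~\ref{l.ldp.upb}\ref{l.ldp.upb.2} combined with the localization Property~\ref{property.loc}. A finite cover of the compact set on which we need control, followed by a union bound (permissible because the number of terms is subexponential in $N^3T$), then upgrades the pointwise estimate at a finite collection of corridor points to the claimed pointwise bound at $(t,x)$.

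\textbf{Main obstacle.} The delicate step is justifying the ``augmented moment'' estimate in Stage~1: adding an intermediate probe $\ZZ_{N,\alpha}(\intermt,\xxi_\aa)^{N\lambda}$ must produce the same leading exponent as predicted by the tree decomposition \eqref{e.treesg} with the additional leaf $(\xxi_\aa,\lambda)$ grafted onto the existing tree at the correct time. This requires a version of Property~\ref{property.limit} for such an augmented configuration, and its concavity/uniqueness analogue of Property~\ref{property.legendre}, so that $\lambda=\mmi_\aa$ is genuinely the saddle with zero marginal cost. Property~\ref{property.loc} is the technical enabler, as it localizes the Feynman--Kac paths to a $\beta$-tube around the corridors, making the intermediate probe effectively coincide with the existing branch point at $(\intermt,\xxi_\aa)$.
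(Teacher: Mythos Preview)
Your two-stage outline matches the paper's, but Stage~1 has a real gap and Stage~2 is missing the key tools.

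In Stage~1, your augmented-moment approach requires a version of Property~\ref{property.limit} for a product of factors at \emph{different} times (one at $\intermt$, the rest at $1$). No such result is available; Property~\ref{property.limit} is strictly a fixed-time statement. You flag this as the ``Main obstacle'' but do not resolve it. More seriously, your treatment of the lower branch $A_-$ cannot work as written: a positive-moment Markov inequality with a probe $\ZZ_{N,\alpha}(\intermt,\xxi_\aa)^{N\lambda}$, $\lambda>0$, only controls the \emph{upper} tail of $\hh_{N,\alpha}(\intermt,\xxi_\aa)$; for the lower tail you would need $\lambda<0$, i.e.\ negative moments, which are not in the toolkit. The paper's route is genuinely different: it keeps the original $n$-point terminal moment but \emph{inserts the indicator} $\ind_{\eventhhi_{N,\alpha}}$ (no extra $\lambda$-probe), localizes via Property~\ref{property.loc}, and then uses the semigroup identity at time $\intermt$ together with the spatial disjointness of the localized tubes to factor the expectation into a time-$\intermt$ piece $E_0$ and time-$(1-\intermt)$ pieces $E_\aa$ that are mutually \emph{independent}. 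The indicator survives inside $E_0$, and the constrained Legendre bound of Lemma~\ref{l.ldp.upb}\ref{l.ldp.upb.1} (the supremum of $\vec{r}\cdot\vecmmi-(\ratekpz\circ\hfc{})(\intermt,\vecxxi,\vec{r})$ under $|r_{\aa_0}-\hvi_{\aa_0}|\geq 1/R$ is strictly below the unconstrained value $\momshe(0\xrightarrow{\intermt}(\vecxxi,\vecmmi))$) produces the strict gap. This handles $A_+$ and $A_-$ simultaneously and uses only fixed-time moments; the tree identity~\eqref{e.treesg} then reassembles $E_0$ and the $E_\aa$'s into $\momshe(0\xrightarrow{1}(\vecxx,\vecmm))$.

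In Stage~2, moving along characteristics is indeed the plan, but the work is not done by Lemma~\ref{l.ldp.upb}\ref{l.ldp.upb.2} or localization. The paper uses three ingredients you do not mention: (i) Lemma~\ref{l.conti.}\ref{l.conti..2}, which says that $\ZZ_N(t_1,x;t_2,y_2)$ \emph{typically} (unconditionally) behaves like the heat kernel; (ii) the FKG-type correlation inequality~\eqref{e.fkg}, which upgrades this typical bound to one that holds up to an $\eventhh$-small probability and hence gives the increment lower bound~\eqref{e.increment.hh}; and (iii) for the upper half of~\eqref{e.p.shape}, an application of Corollary~\ref{c.ldp.soften} at $n{+}1$ \emph{terminal-time} points $\xx_1,\ldots,\xx_n,y$ (where $(1,y)$ is the terminal endpoint of the characteristic through $(t,x)$), which pins $\hh_{N,\alpha}(1,y)$ to $\mps(1,y)$. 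Combining (i)--(ii) with the exact increment formula~\eqref{e.increment.mps} for $\mps$ along a characteristic gives the lower half; combining the increment lower bound with (iii) gives the upper half.
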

%
\noindent{}%
We call Proposition~\ref{p.shape} a one-point result because the event involves one $(t,x)$.
By the union bound, the one-point result automatically extends to finitely many points in $(0,1]\times\R$.
Going from finitely many points to a compact subset of $(0,1]\times\R$ requires certain continuity estimates, which will be done in Section~\ref{s.conti}.

Actually, the proof of Proposition~\ref{p.shape} itself requires certain continuity estimates.
We state them here and defer the proof to Section~\ref{s.conti}.
Hereafter we write $c(v_1,v_2,\ldots)\in(0,\infty)$ for a generic constant that depends only on the designed variables $v_1,v_2,\ldots$.
\begin{lem}\label{l.conti.}
\begin{enumerate}[leftmargin=20pt, label=(\alph*)]
\item[]
\item
\label{l.conti..1}
Take any $(t,y)\in(0,1]\times\R$.
There exists $c=c(t,y)$ such that, for all $r\geq\beta^{1/9}$,
\begin{align}
	\PP\Big[  
		\sup_{x\in[-\beta+y,y+\beta]} |\hh_{N,\alpha}(t,x)-\hh_{N,\alpha}(t,y)| \geq r
	\Big]
	\lesssim
	e^{-\frac{1}{c}\,N^3T r^{3/2}\beta^{-1/6}}.
\end{align}
\item
\label{l.conti..2}
For every $(t_1,y_1),(t_2,y_2)\in[0,1]\times\R$ with $t_1<t_2$ and every $\beta,R\in(0,\infty)$,
\begin{align}
	\PP\Big[  
		\sup_{x\in[-\beta+y_1,y_1+\beta]} \Big| \frac{1}{N^2T} \log \big( \ZZ_{N}(t_1,x;t_2,y_2) \sqrt{T}\big) + \frac{(y_2-x)^2}{2(t_2-t_1)} \Big| \geq \frac{1}{R}
	\Big]
	\ll
	e^{-N^3T \cdot 0}.
\end{align}
\end{enumerate}
\end{lem}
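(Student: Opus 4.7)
The plan for both parts of Lemma~\ref{l.conti.} is a common two-stage template: a pointwise tail bound extracted from the $n$-point LDP machinery (Corollary~\ref{c.ldp.soften}, Lemma~\ref{l.ldp.upb}), upgraded to a supremum-on-an-interval bound via a chaining/union argument. The principal obstacle will be the geometric estimate for the two-point convex-hull rate function that underlies the pointwise step of part~(a); the chaining step and part~(b) then reduce to routine arguments built on top of it.

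For the pointwise step of part~(a), I would fix $x$ with $h:=|x-y|\leq\beta$ and apply Lemma~\ref{l.ldp.upb}\ref{l.ldp.upb.2} to the two-point configuration at $(y,x)$ against the closed set $F_r:=\{\vec v\in\R^2:|v_1-v_2|\geq r\}$, reducing the problem to the geometric lower bound
\[
\inf_{\vec v\in F_r}(\ratekpz\circ\hfc{})(t,(y,x),\vec v)\;\geq\;\tfrac{1}{c(t,y)}\,r^{3/2}h^{-1/6}.
\]
The minimizing envelope $\hfc{,t,(y,x),\vec v}$ consists of two tangent lines to the parabola meeting a central segment joining $(y,v_1)$ to $(x,v_2)$; writing $\tilde v_\cc:=v_\cc-\parab(t,\cdot)$, the two tangent pieces contribute $\tfrac{2\sqrt{2}}{3}(\tilde v_1^{3/2}+\tilde v_2^{3/2})/\sqrt t$ plus position-dependent terms that cancel, while the central piece contributes $(v_2-v_1)^2/(2h)+O(h)$. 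Optimizing under $|v_2-v_1|\geq r$ and the concavity constraint $\tilde v_1\gtrsim r^2 t/h^2$ (which forces the outer tangent slopes to exceed the central slope $r/h$), the rate is bounded below by $c\bigl(r^{3/2}/\sqrt t+r^2/h+r^3 t/h^3\bigr)$ depending on the regime, and an elementary comparison shows each summand dominates $r^{3/2}h^{-1/6}$ precisely when $r\geq h^{1/9}$. The complementary contribution in which $\hh_{N,\alpha}$ dips below typical by $\Omega(r)$ is governed by the faster-than-$r^{3/2}$ KPZ lower-tail estimate and is negligible.

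The chaining step for part~(a) partitions $[y-\beta,y+\beta]$ dyadically into $2^k$ intervals of length $\beta/2^k$, allocates a deviation budget $r_k:=c\,r\,2^{-\gamma k}$ (with $\gamma>0$ chosen so that $\sum_k r_k\leq r$) to scale $k$, and applies the pointwise estimate to the $O(2^k)$ neighboring pairs at each scale. A union bound gives
\[
\PP\bigl[\sup\nolimits_{x\in[y-\beta,y+\beta]}|\hh_{N,\alpha}(t,x)-\hh_{N,\alpha}(t,y)|\geq r\bigr]
\lesssim \sum_{k\geq 0} 2^k \exp\!\bigl(-c'\,N^3T\,r_k^{3/2}(\beta/2^k)^{-1/6}\bigr),
\]
which, for a suitable $\gamma$, is geometric and dominated by its $k=0$ term exactly under the standing hypothesis $r\geq\beta^{1/9}$.

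For part~(b), the space-time translation invariance of $\noise$ yields $\ZZ_N(t_1,x;t_2,y_2)\stackrel{d}{=}\ZZ_N(0,0;t_2-t_1,y_2-x)$, so $(N^2T)^{-1}\log(\ZZ_N(t_1,x;t_2,y_2)\sqrt T)$ has the same distribution as the narrow-wedge scaled height $\hh_N(t_2-t_1,y_2-x)$, which concentrates on $\parab(t_2-t_1,y_2-x)$ by Corollary~\ref{c.ldp.soften} together with its lower-tail analog (obtained via Jensen applied to the positive integer moments of $\ZZ$ controlled by Property~\ref{property.limit}). For the supremum over $x\in[y_1-\beta,y_1+\beta]$, I would place an $\eta$-net, apply the pointwise bound at each of the $O(\beta/\eta)$ net points by a union bound, and invoke part~(a) applied to the narrow-wedge SHE started at time $Tt_1$ to control the oscillations between neighboring net points. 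Taking $\eta$ polynomially small in $N$ keeps the enumeration cost $\log(\beta/\eta)$ negligible compared with $N^3T$, yielding the claimed $\ll e^{-N^3T\cdot 0}$ decay.
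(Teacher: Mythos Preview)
Your proposed route for part~(a) has a genuine gap: the geometric lower bound $\inf_{\vec v\in F_r}(\ratekpz\circ\hfc{})(t,(y,x),\vec v)\geq r^{3/2}h^{-1/6}/c$ is false. Take $y=0$, $x=h$, $t=1$ and the configuration $(v_1,v_2)=(\parab(1,0),\,\parab(1,h)+2r)=(0,\,-h^2/2+2r)$. For $r>h^2/2$ this lies in $F_r$, yet since $(y,v_1)$ sits on the parabola, the convex hull $\hfc{}$ is just the one-point envelope through $(x,v_2)$, so $(\ratekpz\circ\hfc{})(t,(y,x),(v_1,v_2))=\ratekpz(t,x,v_2)\sim r^{3/2}$, not $r^{3/2}h^{-1/6}$. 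Hence $\inf_{F_r}(\ratekpz\circ\hfc{})\leq c\,r^{3/2}$, and Lemma~\ref{l.ldp.upb}\ref{l.ldp.upb.2} yields at best $\lesssim e^{-N^3T r^{3/2}/c}$. The obstruction is structural: the extended rate function $(\ratekpz\circ\hfc{})$ penalizes only being \emph{above} the hull and is flat in the direction of lowering $v_1$ below $\hfc{}(y)$; it therefore cannot detect the cost of $\hh_{N,\alpha}(t,y)$ sitting at its typical value while $\hh_{N,\alpha}(t,x)$ is pushed up by $r$. Your dichotomy ``concave region vs.\ lower-tail region'' misses precisely these borderline configurations with $v_1$ near (not below) the parabola but $(v_1,v_2)\notin\hvspacec$.

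The paper takes a completely different route, deriving Lemma~\ref{l.conti.} as a corollary of Proposition~\ref{p.conti}. That proposition is obtained not from the $n$-point LDP but from one-point tail bounds on $\WW_N:=\ZZ_N/\hk$: an upper-tail bound via direct estimation of integer moments of $\WW_N$, a lower-tail bound imported from \cite{das21iter,corwin2021kpz}, then upgraded to point-to-segment bounds \eqref{e.conti.pt-to-segment}--\eqref{e.conti.pt-to-line} via the Gibbs/line-ensemble monotonicity arguments of \cite{corwin2021kpz}. H\"older-type continuity of $\hh_N$ is then extracted by a semigroup comparison (writing $\ZZ_N(t_1,x_1)$ as an integral over an intermediate time $t_0$ and comparing to $\ZZ_N(t_2,x_2)$ through ratios of heat kernels), followed by Kolmogorov's continuity theorem to make the bound uniform, and finally an approximation step \eqref{e.soften.approx} transferring from $\hh_N$ to $\hh_{N,\alpha}$. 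For part~(b) the paper applies \eqref{e.conti.pt-to-segment} directly together with shift and time-reversal invariance of $\WW_N$; no chaining or appeal to part~(a) is needed. Your ``Jensen applied to positive integer moments'' for the lower tail would not work either: positive moments via Markov control only the upper tail, and the lower tail genuinely requires the external input \eqref{e.conti.pt-to-pt.lw}.
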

\noindent%
To understand where the factor $(y_2-x)^2/(2(t_2-t_1))$ in Lemma~\ref{l.conti.}\ref{l.conti..2} came from, refer to the Feynman--Kac formula~\eqref{e.ZZ.ptp}. 
There, if we replace $\noise$ with zero, the result becomes the heat kernel, which reads $\hk(T(t_2-t_1),NT(y_2-x))$ here.
Applying $(N^2T)^{-1}\log(\Cdot\sqrt{T})$ to the heat kernel gives $-(y_2-x)^2/(2(t_2-t_1))$ plus a negligibly term.
Put it differently, Lemma~\ref{l.conti.}\ref{l.conti..2} says that $\ZZ_{N}(t_1,x;t_2,y_2)$ \emph{typically} behaves as if the spacetime white noise did not contribute at all.

\subsection{Limit shape along the noise corridors}
\label{s.shape.corridors}
Here we prove Proposition~\ref{p.shape} when $(t,x)$ lies on a noise corridor.
More precisely, letting $\vecxx$, $\vechv$, $\vecmm$, $\intermt$, $\vecxxi$, $\vechvi$, $\vecmmi$, and $\Aranch(\aa)$ be as in \ref{property.intermediate.} in Section~\ref{s.momt.tsai2023high}, we assume $(t,x)=(\intermt,\xxi_{\aa_0})$ for some $\aa_0$.
We write $t$ as $\intermt$ to be consistent with the notation in \ref{property.intermediate.}.

The first step is to perform localization.
Let $\eventhhi_{N,\alpha}:=\{|\hh_{N,\alpha}(\intermt,\xxi_{\aa_0})-\mps(\intermt,\xxi_{\aa_0})|>1/R\}$ be the first event in \eqref{e.p.shape}.
Recall that the event $\eventhh_{N,\alpha,\delta}$ controls the value of $\hh_N(1,\xx_\cc)$, $\cc=1,\ldots,n$, and use this fact to write
$
	\PP[\eventhhi_{N,\alpha}\cap\eventhh_{N,\alpha,\delta}]
	\lesssim
	\exp(-N^3T\vecmm\cdot\vechv)
	\cdot
	\EE[ \prod_{\cc=1}^n \ZZ_{N,\alpha}(1,\xx_\cc)^{N\mm_\cc} \ind_{\eventhhi_{N,\alpha}} ].
$
Within the last expectation, write $\ZZ_{N,\alpha}$ as the sum of $\ZZ^{\betaloc}_{N,\alpha}$ and $(\ZZ_{N,\alpha}-\ZZ^{\betaloc}_{N,\alpha})$, note that the both are non-negative, and use the inequality $(A+B)^{N\mm_\cc}\leq 2^{N\mm_\cc}A^{N\mm_\cc}+2^{N\mm_\cc}B^{N\mm_\cc}$.
The result gives
\begin{align}
	\PP[\eventhhi_{N,\alpha}\cap\eventhh_{N,\alpha,\delta}]
	\lesssim
	2^{N\mm} \, e^{-N^3T\vecmm\cdot\vechv}\,
	\Big( 
		\EE\Big[ \prod_{\cc=1}^n \ZZ^{\betaloc}_{N,\alpha}(1,\xx_\cc)^{N\mm_\cc} \ind_{\eventhhi_{N,\alpha}} \Big]
		+
		(\text{remainder})
	\Big),
\end{align}
where the remainder is given by summing the expectation in Property~\ref{property.loc} over all nonempty $A\subset\{1,\ldots,n\}$.
Under \eqref{e.scaling}, the factor $2^{N\mm}$ is negligible, so we absorb it into $\lesssim$.
By Property~\ref{property.loc}, the remainder is $\lesssim \exp(N^3T(\momshe(0\xrightarrow{\scriptscriptstyle 1} (\vecxx,\vecmm))-f_1(\beta)))$, for some $f_1:(0,\infty)\to(0,\infty)$.
Recall that $\vecmm$ was chosen to be the Legendre-dual variable of $\vechv$.
Multiply both sides of the last bound by the factor $ \exp(-N^3T\vecmm\cdot\vechv) $ and recognize that $(\vecmm\cdot\vechv-\momshe(0\xrightarrow{\scriptscriptstyle 1} (\vecxx,\vecmm)))$ is $\ratekpz(1,\vecxx,\vechv)$ with the aid of Property~\ref{property.legendre}.
Doing so gives
\begin{align}
	\label{e.p.shape.localization}
	\PP[\eventhhi_{N,\alpha}\cap\eventhh_{N,\alpha,\delta}]
	\lesssim
	e^{-N^3T\vecmm\cdot\vechv}\,\EE\Big[ \prod_{\cc=1}^n \ZZ^{\betaloc}_{N,\alpha}(1,\xx_\cc)^{N\mm_\cc} \ind_{\eventhhi_{N,\alpha}} \Big]
	+
	e^{-N^3T(\ratekpz(1,\vecxx,\vechv)+f_1(\beta))}.
\end{align}

Next, we decompose the expectation in \eqref{e.p.shape.localization} at time $\intermt$.
Recall $\ZZ^{\betaloc}_{N,\alpha}(t,x)$ and $\ZZ^{\betaloc}_{N,\alpha}(t_0,x',t,x)$ from Section~\ref{s.intro.notation} and invoke the semigroup identity
\begin{align}
	\label{e.cutZZ}
	\ZZ^{\betaloc}_{N,\alpha}(1,\xx_\cc)
	=
	\int NT\d x\  \ZZ^{\betaloc}_{N,\alpha}(\intermt,x) \ \ZZ^{\betaloc}_{N}(\intermt,x;1,\xx_\cc).
\end{align}
Take any $\cc\in\Aranch(\aa)$ and $\cc'\in\Aranch(\aa')$ with $\aa\neq\aa'$.
By the definition of $\Aranch(\Cdot)$, $\inf_{1-s\in[\intermt,1]}|\optimal_{\cc}(s)-\optimal_{\cc'}(s)|>0$.
Take $\beta$ small enough such that $\inf_{1-s\in[\intermt,1]}|\optimal_{\cc}(s)-\optimal_{\cc'}(s)|>2\beta$, for all such $\cc,\cc'$.
Under this condition, the sets of random variables
$
	\{ \ind_{\eventhhi_{N,\alpha}}, \ZZ^{\betaloc}_{N,\alpha}(\intermt,x) \}
$
and
$	\{
		\ZZ^{\betaloc}_{N}(\intermt,x;1,\xx_\cc)
	\}_{\cc\in\Aranch(1)}
$
\ldots and
$	
	\{
		\ZZ^{\betaloc}_{N}(\intermt,x;1,\xx_\cc)
	\}_{\cc\in\Aranch(n')}	
$
are independent, because they are measurable with respect to the noise over disjoint regions in spacetime.
Using this property to evaluate the moment gives
\begin{align}
	\label{e.p.shape.cut}
	\text{(expectation in \eqref{e.p.shape.localization})}
	=
	\Big( \prod_{\aa=1}^{n'} \prod_{\cc\in\Aranch(\aa)} \prod_{\ii=1}^{N\mm_\cc} \int_{-\beta+\xxi_\aa}^{\xxi_\aa+\beta} \hspace{-5pt} NT \d x_{\cc,\ii} \Big)
	\cdot
	E_0 \cdot \prod_{\aa=1}^{n'} E_\aa,
\end{align}
where
$
	E_0 := \EE [\prod_{\aa=1}^{n'} \prod_{\ii=1}^{N\mmi_\aa} \ZZ^{\betaloc}_{N,\alpha}(\intermt,x_{\aa,\ii}) \cdot \ind_{\eventhhi_{N,\alpha}}]
$
and
$
	E_\aa := \EE [\prod_{\cc\in\Aranch(\aa)} \prod_{\ii=1}^{N\mm_\cc} \ZZ^{\betaloc}_{N}(\intermt,x_{\cc,\ii};1,\xx_\cc)],
$
and the bounds of the integrals come from the constraints on the BMs in $\ZZ^{\betaloc}_{N}(\intermt,x_{\cc,\ii};1,\xx_\cc)$.

Next we bound $E_0$.
First, write $\ZZ^{\betaloc}_{N,\alpha}(t,x)\leq \ZZ_{N,\alpha}(t,x)=\exp(N^2T\hh_{N,\alpha}(t,x))$.
In light of \eqref{e.p.shape.cut}, we consider $x\in[-\beta+\xxi_\aa,\xxi_\aa+\beta]$ only.
Set $A_{N,\alpha}:=\max_{\aa=1}^{n'} \sup_{x\in[-\beta+\xxi_\aa,\xxi_\aa+\beta]} |\hh_{N,\alpha}(\intermt,x)-\hh_{N,\alpha}(\intermt,\xxi_\aa)|$, which controls the error when one approximates the last exponential by its value at $x=\xxi_\aa$.
We have
$
	E_0 \leq \EE[ \exp(N^3T(\sum_{\aa}\mmi_{\aa}\hh_{N,\alpha}(\intermt,\xxi_\aa)))\exp(N^3T\mm A_{N,\alpha})\ind_{\eventhhi_{N,\alpha}} ].
$
Invoke a small parameter $\gamma>0$ and decompose the last expectation into $\{A_{N,\alpha}\leq \gamma\}$ and $\{A_{N,\alpha}> \gamma\}$:
\begin{align}
   E_{0,1} &:= \EE\big[ e^{N^3T\sum_{\aa}\mmi_{\aa}\hh_{N,\alpha}(\intermt,\xxi_\aa)} e^{N^3T\mm A_{N,\alpha}}\ind_{\eventhhi_{N,\alpha}} \ind_{\{A_{N,\alpha}\leq \gamma\}} \big],
\\
   E_{0,2} &:= \EE\big[ e^{N^3T\sum_{\aa}\mmi_{\aa}\hh_{N,\alpha}(\intermt,\xxi_\aa)} e^{N^3T\mm A_{N,\alpha}}\ind_{\eventhhi_{N,\alpha}} \ind_{\{A_{N,\alpha}> \gamma\}} ].
\end{align}
In $E_{0,1}$, bound the second exponential by $\exp(N^3T\mm\gamma)$ and apply Lemma~\ref{l.ldp.upb}\ref{l.ldp.upb.2}.
Doing so gives $ E_{0,1} \lesssim \exp(N^3T (\mm\gamma+\sup\{ \vec{r}\cdot\vecmmi - (\ratekpz\circ\hfc{})(\intermt,\vecxxi,\vec{r}) \}))$, where the supremum runs over all $\vec{r}\in\R^{n'}$ with the constraint $|r_{\aa_0}-\hvi_{\aa_0}|\geq1/R$.
Had it been without the constraint, by Lemma~\ref{l.ldp.upb}\ref{l.ldp.upb.1} and \eqref{e.legendre.dual.intermediate}, the supremum would be achieved uniquely at $\vec{r}=\vechvi$, and be equal to $\momshe(0\xrightarrow{\scriptscriptstyle \intermt}(\vecxxi,\vecmmi))$.
\emph{With} the constraint, the supremum is strictly smaller than that, so $ E_{0,1} \lesssim \exp(N^3T (\mm\gamma+\momshe(0\xrightarrow{\scriptscriptstyle \intermt}(\vecxxi,\vecmmi))-1/c_2))$.
As for $E_{0,2}$, forgo the indicator $\ind_{\eventhhi_{N,\alpha}}$ and apply the Cauchy--Schwarz inequality to separate the two exponentials
\begin{align}
   E_{0,2} 
   \leq 
   \big( \EE\big[ e^{2N^3T\sum_{\aa}\mmi_{\aa}\hh_{N,\alpha}(\intermt,\xxi_\aa)}] \big)^{1/2} 
   \big( \EE\big[e^{2N^3T\mm A_{N,\alpha}} \ind_{\{A_{N,\alpha}> \gamma\}} \big]\big)^{1/2}.
\end{align}
By Property~\ref{property.limit}, the first expectation on the right hand side is $\lesssim \exp(c_3N^3T)$.
By Lemma~\ref{l.conti.}\ref{l.conti..1}, the second expectation on the right hand side is $\lesssim \exp(-N^3Tf_4(\beta,\gamma))$ for some $f_4:(0,\infty)^2\to(0,\infty)$ such that $\lim_{\beta\to 0}f_4(\beta,\gamma)=\infty$ for each $\gamma>0$.
Altogether, 
\begin{align}
	\label{e.p.shape.E0}
	E_0
	\lesssim
	\exp\big( N^3T \,\big(\mm\gamma+\momshe(0\xrightarrow{\intermt}(\vecxxi,\vecmmi))-1/c_2\big) \big)
	+
	\exp\big( \tfrac{1}{2}N^3T \big(c_3-f_4(\beta,\gamma)\big)\big).
\end{align}

Let us complete the proof.
Insert the bound \eqref{e.p.shape.E0} into \eqref{e.p.shape.cut}, observe that the bound does not depend on the integration variables, and factor it out of the integrals.
Next, rewrite the remaining integrals as 
$ 
	\prod_{\aa=1}^{n'} \EE[\prod_{\cc\in\Aranch(\aa)} \big(\int_{-\beta+\xxi_\aa}^{\xxi_\aa+\beta} NT \d x_{\cc} \, \ZZ^{\betaloc}_{N}(\intermt,x_{\cc,\ii};1,\xx_\cc))^{N\mm_\cc}],
$
recognize the last integral as $\ZZ^{\betaloc}_{N,\beta}(\intermt,\xx_\aa;t,\xx_\cc)$, and bound it by $\ZZ_{N,\beta}(\intermt,\xxi_\aa;1,\xx_\cc)$.
Doing so gives
\begin{align}
	\label{e.temp1}
	\prod_{\aa=1}^{n'} \EE\Big[\prod_{\cc\in\Aranch(\aa)} \ZZ_{N,\beta}(\intermt,\xxi_\aa;1,\xx_\cc)^{N\mm_\cc}\Big].
\end{align}
Applying Property~\ref{property.limit} to each of the last expectations gives 
\begin{align}
	\eqref{e.temp1}
	\lesssim 
	\exp\Big(N^3T\Big(\sum_{\aa=1}^n \momshe(\xxi_\aa\xrightarrow{1-\intermt}(\xx_\cc,\mm_\cc)_{\cc\in\Aranch(\aa)})+f_5(\beta)\Big)\Big),
\end{align}
where $f_5(\beta)\to 0$ as $\beta\to 0$.
Combine the preceding results with the aid of \eqref{e.treesg}.
We arrive at
\begin{subequations}
\label{e.p.shape.cut.}
\begin{align}
	\text{(the expectation in \eqref{e.p.shape.localization})}
	\lesssim&
	\exp\big(N^3T \big( \momshe\big(0\xrightarrow{1} (\vecxx,\vecmm)\big) +\mm\gamma-1/c_2 + f_5(\beta) \big)\big)
\\
	&+
	\exp\big( \tfrac{1}{2}N^3T \big(c'_3-f_4(\beta,\gamma)\big)\big).
\end{align}
\end{subequations}
Set $\gamma:=1/(2\mm c_2)$ and fix a small enough $\beta>0$ such that $-1/(2c_2)+f_5(\beta)<0$ and that $c'_3-f_4(\beta,1/(2\mm c_2))< -\momshe(0\xrightarrow{\scriptscriptstyle 1} (\vecxx,\vecmm))$.
Then, insert \eqref{e.p.shape.cut.} into \eqref{e.p.shape.localization} and recognize $(-\vechv\cdot\vecmm+\momshe(0\xrightarrow{\scriptscriptstyle 1} (\vecxx,\vecmm)))$ as $-\ratekpz(1,\vecxx,\vecmm)$.
Doing so gives
\begin{align}
	\PP[\eventhhi_{N,\alpha}\cap\eventhh_{N,\alpha,\delta}]
	=
	\PP\big[ \big\{ \big|\hh_{N,\alpha}(\intermt,\xxi_{\aa_0})-\mps(\intermt,\xxi_{\aa_0})\big|>\tfrac{1}{R} \big\} \cap\eventhh_{N,\alpha,\delta} \big]
	\ll
	e^{-N^3T\ratekpz(1,\vecxx,\vecmm)}.
\end{align}
By Corollary~\ref{c.ldp.soften}, the right hand side is $\sim \PP[\eventhh_{N,\alpha,\delta}]$, so the desired result follows.

\subsection{Limit shape outside the noise corridors}
\label{s.shape.outcorridors}
To complete the proof of Proposition~\ref{p.shape}, we now consider any $(t,x)$ outside the noise corridors.

The key is to examine the increments of $\mps$ and of $\hh_{N,\alpha}$ along the characteristics.
Recall the characteristics of $\mps$ from Section~\ref{s.intro.ratefn}, take any $(t_1,y_1),(t_2,y_2)\in(0,1]\times\R$, with $t_1<t_2$, that are connected by a characteristic, and let $\mathsf{Chara}_{12}$ denote the open segment of the characteristic between $(t_1,y_1)$ and $(t_2,y_2)$, excluding the endpoints.
We assume that $\mathsf{Chara}_{12}$ does not intersect any noise corridor.

To analyze the increment of $\mps$ along $\mathsf{chara}_{12}$, note that $\mps$ solves \eqref{e.iburgers} \emph{classically} away from the noise corridors, so 
$
	\frac{\d~}{\d t} \mps|_{\mathsf{Chara}_{12}} = (\partial_t \mps + \partial_x \mps \cdot v)|_{\mathsf{Chara}_{12}} = (\frac12 (\partial_x \mps)^2 + \partial_x \mps \cdot v)|_{\mathsf{Chara}_{12}},
$
where $v:=(x_2-x_1)/(t_2-t_1)$ is the velocity when one travels along the characteristics in forward time.
Also, $(\partial_x\mps)|_{\mathsf{Chara}_{12}}=-v$, so the differentiation evaluates to $-v^2/2$, whereby
\begin{align}
	\label{e.increment.mps}
	\mps(t_2,x_2) - \mps(t_1,x_1) = - (x_2-x_1)^2/(2(t_2-t_1)).
\end{align}

Next, let us analyze the increment of $\hh_{N,\alpha}$.
Hereafter, we say an event $\mathcal{A}$ holds \textbf{up to an $\eventhh$-small probability} if $\PP[\mathcal{A}^\comple] \ll \exp(-N^3T\ratekpz(1,\vecxx,\vechv))$, which implies $\PP[\mathcal{A}^\comple|\eventhh_{N,\alpha,\delta}]\ll\exp(-N^3T\cdot 0)$.
We only need a lower bound on the increment.
Write $\exp(N^2T\hh_{N,\alpha}(t_2,y_2))=\ZZ_{N,\alpha}(t_2,y_2)$, assume $t_1>0$, invoke the semigroup identity
$
	\ZZ_{N,\alpha}(t_2,y_2)
	=
	\int_{\R} NT \d x \, \ZZ_{N,\alpha}(t_1,x) \, \ZZ_{N}(t_1,x;t_2,y_2),
$
take a small parameter $\beta>0$, forgo the integral outside $[-\beta+y_1,y_1+\beta]$, and write the result as
\begin{align}
	\label{e.toward.increment.hh.}
	\exp(N^2T\hh_{N,\alpha}(t_2,y_2))
	\geq
	\int_{|x-y_1|\leq \beta} \d x \, \exp(N^2T\hh_{N,\alpha}(t_1,x)) \, NT\,\ZZ_{N}(t_1,x;t_2,y_2).
\end{align}
Recall $R$ from \eqref{e.p.shape} and apply Lemma~\ref{l.conti.}\ref{l.conti..1} with a small enough $\beta$ such that, up to an $\eventhh$-small probability,
$
	\inf_{[-\beta+y_1,y_1+\beta]}\hh_{N,\alpha}(t_1,x)\geq\hh_{N,\alpha}(t_1,y_1)-1/(4R).
$ 
Insert this inequality into \eqref{e.toward.increment.hh.} and take the logarithm.
Doing so gives, up to an $\eventhh$-small probability,
\begin{align}
	\label{e.toward.increment.hh}
	\hh_{N,\alpha}(t_2,y_2)-\hh_{N,\alpha}(t_1,y_1)+\frac{1}{4R} 
	\geq 
	\frac{1}{N^2T}\log\int_{|x-y_1|\leq \beta} \d x \, NT\,\ZZ_{N}(t_1,x;t_2,y_2).
\end{align}

Next, we seek to bound the last term in \eqref{e.toward.increment.hh} from below by Lemma~\ref{l.conti.}\ref{l.conti..2}.
As explained after that lemma, it concerns the \emph{typical} behaviors of $\ZZ_{N}(t_1,x;t_2,y_2)$.
In particular, the bound in Lemma~\ref{l.conti.}\ref{l.conti..2} is only $\ll\exp(-N^3T\cdot 0)$, not necessarily $\eventhh$-small.
To leverage Lemma~\ref{l.conti.}\ref{l.conti..2} into a result that holds up to an $\eventhh$-small probability, we invoke the following inequality, which is proven in Appendix~\ref{s.a.tools},
\begin{align}
	\label{e.fkg}
	\EE\Big[ &\ind_{\eventinc_{N}^\comple} \, \prod_{\cc=1}^{n} \ZZ_{N,\alpha}(1,\xx_{\cc})^{N\mm_\cc} \Big]
	\leq
	\PP\big[ \eventinc_{N}^\comple \big]
	\,\EE\Big[ \prod_{\cc=1}^{n} \ZZ_{N,\alpha}(1,\xx_{\cc})^{N\mm_\cc} \Big],
\\
	\label{e.fkg.event}
	&\eventinc_{N} := \Big\{\inf_{x\in[-\beta+y_1,y_1+\beta]}\Big(\frac{1}{N^2T}\log\big(\ZZ_N(t_1,x;t_2,y_2)\sqrt{T}\big)+\frac{(y_2-x)^2}{2(t_2-t_1)}\Big) \geq -\frac{1}{4R} \Big\}.
\end{align}
Consider $\PP[\eventinc_{N}^\comple\cap\eventhh_{N,\alpha,\delta}]$, recall that $\eventhh_{N,\alpha,\delta}$ controls the value of $\hh_{N,\alpha}(1,\xx_\cc)$, and use this property to bound the last probability by $\lesssim \exp(-N^3T\vechv\cdot\vecmm)\cdot(\text{left hand side of \eqref{e.fkg}})$.
Then, bound the two terms on the right hand side of \eqref{e.fkg} by Lemma~\ref{l.conti.}\ref{l.conti..2} and Property~\ref{property.limit}, respectively.
Doing so gives 
$
	\PP[\eventinc_{N}^\comple\cap\eventhh_{N,\alpha,\delta}]
	\ll
	\exp(N^3T(-\vechv\cdot\vecmm+\momshe(0\xrightarrow{\scriptscriptstyle 1} (\vecxx,\vecmm))).
$
Recognizing the last expression as $\exp(-N^3T\ratekpz(1,\vecxx,\vechv))$, we conclude that $(\eventinc_{N}\cap\eventhh_{N,\alpha,\delta})$ holds up to an $\eventhh$-small probability.
This result holds for all $t_1<t_2\in[0,1]$, including $t_1=0$, because Lemma~\ref{l.conti.}\ref{l.conti..2} and \eqref{e.fkg} do.

Let us now derive lower bounds on the increment of $\hh_{N,\alpha}$.
Consider first $t_1>0$, whence \eqref{e.toward.increment.hh} holds.
Apply the inequality in \eqref{e.fkg.event} to \eqref{e.toward.increment.hh}.
In the result, take $\beta$ to be small enough such that, for all $x\in[-\beta+y_1,y_1+\beta]$, the term $(y_2-x)^2/(2(t_2-t_1))$ approximates $(y_2-y_1)^2/(2(t_2-t_1))$ to within $1/R$.
Also, note that $(N^2T)^{-1}\log\sqrt{N^2T}\to 0$ under \eqref{e.scaling}.
Doing so gives, under the assumption that $t_1>0$ and up to an $\eventhh$-small probability,
\begin{align}
	\label{e.increment.hh}
	\hh_{N,\alpha}(t_2,y_2)-\hh_{N,\alpha}(t_1,y_1) \geq -(y_2-y_1)^2/(2(t_2-t_1)) - 3/(4R).
\end{align}
Next, consider $(t_1,y_1)=(0,0)$.
Write $\hh_{N,\alpha}(t_2,y_2):=(N^2T)^{-1}\log\int_{|x|\leq \alpha} \d x \, NT\,\ZZ_{N}(0,x;t_2,y_2)$, apply the inequality in \eqref{e.fkg.event} with $(t_1,y_1)=(0,0)$ to the last integral, and follow the same procedure to simplify the result.
The result gives, up to an $\eventhh$-small probability,
\begin{align}
	\label{e.increment.hh00}
	\hh_{N,\alpha}(t_2,y_2) \geq -y_2^2/(2t_2) - 3/(4R).
\end{align}

Let us prove the upper-half of \eqref{e.p.shape}: Up to an $\eventhh$-small probability, $\hh_{N,\alpha}(t,x) \leq  \mps(t,x) + 1/R$.
With $(t,x)$ being outside the noise corridors, there exists a unique $y\in\R$ such that $(t,x)$ and $(1,y)$ is connected by a characteristic; see Figure~\ref{f.shocks} and the last paragraph in Section~\ref{s.intro.ratefn}.
The points $(t,x)$ and $(1,y)$ satisfy the conditions on $(t_1,x_1)$ and $(t_2,x_2)$ in the second paragraph.
At time $1$, Corollary~\ref{c.ldp.soften} gives the multipoint LDP for $\hh_{N,\alpha}(1,\Cdot)$.
Apply Corollary~\ref{c.ldp.soften} at the $(n+1)$ points $\xx_1,\ldots,\xx_n,y$ to get
$
	\PP[ \{ |\hh_{N,\alpha}(1,y)- r|\leq\delta \}\cap \eventhh_{N,\alpha,\delta}] \sim \exp(-N^3T\ratekpz(1,\vecxx\cup\{y\},\vechv\cup\{r\})),
$
for all $r\in\R$ such that $\vechv\cup\{r\} \in \hvspacec(1,\vecxx\cup\{y\})^\circ$.
Referring back to \eqref{e.hvspacec} and Figure~\ref{f.shape.termt}, we see that the last condition is equivalent to $r >\hf{,1,\vecxx}(y)=\mps(1,y)$.
For all such $r$, as is readily checked from \eqref{e.ratekpz}, $\ratekpz(1,\vecxx\cup\{y\},\vechv\cup\{r\})<\ratekpz(1,\vecxx,\vechv)$.
Hence $ |\hh_{N,\alpha}(1,y)- r|\leq\delta \}\cap \eventhh_{N,\alpha,\delta} $ holds up to an $\eventhh$-small probability.
Since this holds for all $r >\mps(1,y)$ and since $\delta\to 0$, the event $ |\hh_{N,\alpha}(1,y)- r|\leq\delta \}\cap \eventhh_{N,\alpha,\delta} $ holds up to an $\eventhh$-small probability.
Combining this with \eqref{e.increment.hh} for $(t_1,y_1,t_2,y_2)=(t,x,1,y)$ and with \eqref{e.increment.mps} gives, up to an $\eventhh$-small probability, $\hh_{N,\alpha}(t,x) \leq  \mps(t,x) + 1/R$, which is the desired result.

Finally, we prove the lower-half of \eqref{e.p.shape}: Up to an $\eventhh$-small probability, $\hh_{N,\alpha}(t,x) \geq  \mps(t,x) - 1/R$.
With $(t,x)$ being outside the noise corridors, there exists a unique $(\intermt,\xxi)$ along the noise corridors such that $(\intermt,\xxi)$ and $(t,x)$ are connected by a characteristic; see Figure~\ref{f.shocks} and the last paragraph in Section~\ref{s.intro.ratefn}.
The points $(\intermt,\xxi)$ and $(t,x)$ satisfy the conditions on $(t_1,x_1)$ and $(t_2,x_2)$ in the second paragraph.
When $\intermt>0$, the result of Section~\ref{s.shape.corridors} gives, up to an $\eventhh$-small probability,
$ \hh_{N,\alpha}(\intermt,\xxi) \leq \mps(\intermt,\xxi) + 3/(4R) $.
Combining this inequality with \eqref{e.increment.hh} and \eqref{e.increment.mps} for $(t_1,y_1,t_2,y_2)=(\intermt,\xxi,t,x)$ gives the desired result $\hh_{N,\alpha}(t,x) \geq  \mps(t,x) - 1/R$.
When $\intermt=0$, necessarily $\xxi=0$, and $\mps(0,0)=0$.
Combining \eqref{e.increment.hh00} and \eqref{e.increment.mps} for $(t_1,y_1,t_2,y_2)=(\intermt,\xxi,t,x)$ gives the desired result.

\section{Continuity estimates}
\label{s.conti}
The main task in this section is to prove the result.
Recall $\hh_N$ from Section~\ref{s.intro.notation}.
\begin{prop}\label{p.conti}
Given any $R<\infty$, there exists a $c=c(R)$ such that, for all $(t_1,x_1),(t_2,x_2)\in[1/R,1]\times[-R,R]$ and $u\geq c$,
\begin{align}
	\label{e.conti.holder}
	\PP\big[ |\hh_N(t_2,x_2)-\hh_N(t_1,x_1)| \geq (|t_2-t_1|^{1/13}+|x_2-x_1|^{1/7})\,u \big] \leq \exp(-\tfrac{1}{c}N^3Tu^{3/2}).
\end{align}
\end{prop}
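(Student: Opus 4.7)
My plan is to split the two-parameter increment into a purely spatial and a purely temporal piece,
\[
\hh_N(t_2,x_2) - \hh_N(t_1,x_1) = [\hh_N(t_2,x_2) - \hh_N(t_2,x_1)] + [\hh_N(t_2,x_1) - \hh_N(t_1,x_1)],
\]
bound each by an $\exp(-c^{-1}N^3Tu^{3/2})$ tail with the appropriate modulus in $|x_2-x_1|$ or $|t_2-t_1|$, and combine via a union bound. The workhorse for both pieces is a high-moment Markov inequality fueled by Property~\ref{property.limit}: after transferring moment bounds from $Z_{N,\alpha}$ to $Z_N$ (by letting $\alpha\to 0$ and using the continuity of $\momshe$), one has $\EE[\prod_\cc Z_N(t,\xx_\cc)^{Nm_\cc}] \lesssim \exp(N^3 T\, \momshe(0 \xrightarrow{t}(\vecxx,\vecmm)))$ for the relevant multi-point configurations.

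For the spatial piece at fixed $t\in[1/R,1]$, I would condition on the high-probability event $\{|\hh_N(t,x_1) - \parab(t,x_1)| \leq C\}$ (whose complement has exponentially small probability by Corollary~\ref{c.ldp.soften} at one point) and apply Markov to $Z_N(t,x_2)^{Nm}$. Combined with the deterministic lower bound on $Z_N(t,x_1)$ afforded by the conditioning, and using Legendre duality (Property~\ref{property.legendre}) to identify $\sup_m(mr - \momshe(0 \xrightarrow{t}(x_2,m)))$ with $\ratekpz(t,x_2,\cdot)\sim r^{3/2}$, this recovers the pointwise $u^{3/2}$ tail. To obtain the Hölder modulus $|x_2-x_1|^{1/7}$, I would expand $Z_N(t,x_2) - Z_N(t,x_1)$ by the semigroup at an intermediate time $s = t - \tau$ and bound the resulting integral by the spatial Hölder regularity of the forward propagator $Z_N(s,y;t,\cdot)$, whose moments are again controlled by Property~\ref{property.limit}; a joint optimization over $\tau$, the moment order $m$, and the regularity exponent produces the exponent $1/7$.

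For the temporal piece at fixed $x_1$, I use the semigroup identity $Z_N(t_2,x_1) = \int NT\,dy\,Z_N(t_1,y)\,Z_N(t_1,y;t_2,x_1)$, localizing the $y$-integral to $|y - x_1| \leq \beta$. The Gaussian asymptotics of the noise-free propagator contribute a factor $\exp(-N^2T\beta^2/(2(t_2-t_1)))$ to the lower bound on $Z_N(t_2,x_1)$ (after replacing $\hh_N(t_1,\cdot)$ by $\hh_N(t_1,x_1)$ on the window, using spatial continuity at a smaller scale), while the upper bound uses Markov on $Z_N(t_2,x_1)$ directly. Balancing the Gaussian penalty $\beta^2/|t_2-t_1|$ against the moment-optimization cost yields the modulus $|t_2-t_1|^{1/13}$.

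The principal obstacle, I expect, is controlling the \emph{lower} tail of $\hh_N$ at a single point, since Property~\ref{property.limit} delivers only positive-moment (i.e.\ upper-tail) information. I would derive the needed lower-tail bound $\PP[\hh_N(t,x) \leq \parab(t,x) - r] \leq \exp(-cN^3T r^{q})$ from Jensen's inequality applied to the Feynman--Kac formula~\eqref{e.feynmankac}, reducing the problem to a sub-Gaussian estimate on the noise integral along a single fixed BM trajectory; alternatively one could cite existing lower-tail estimates from the KPZ literature. A secondary difficulty is bookkeeping: the specific Hölder exponents $1/7$ and $1/13$ come out of a joint optimization over several free parameters (moment order, intermediate time offset $\tau$, localization window $\beta$) and are clearly not sharp, so some rebalancing may be needed to reproduce them exactly as stated.
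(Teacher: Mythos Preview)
Your proposal has the right architecture in places (semigroup decomposition at an intermediate time, optimization over a localization window) but misses the engine that actually drives the paper's proof, and the steps you do sketch have a genuine gap.

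The paper does \emph{not} split into separate spatial and temporal increments, nor does it use Property~\ref{property.limit} and Markov's inequality as the workhorse. Instead it works with the heat-kernel-normalized object $\WW_N(t_0,x;t,y):=\ZZ_N(t_0,x;t,y)/\hk(T(t-t_0),NT(y-x))$ and establishes \emph{two-sided, uniform-in-$x$} tail bounds of the form
\[
\PP\Big[\sup_{|x|\leq \gamma\sqrt{u}}\Big|\tfrac{1}{N^2T}\log \WW_N(0,0;t,x)\Big|>t^{1/6}u\Big]\leq c\gamma^2 e^{-\frac{1}{c}N^3Tu^{3/2}},
\]
derived from one-point upper- and lower-tail estimates (the latter imported from \cite{das21iter,corwin2021kpz}) via the Gibbs/line-ensemble coupling arguments of \cite{corwin2021kpz}. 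With this in hand, one writes $\ZZ_N(t_1,x_1)=\int \ZZ_N(t_0,x)\,\ZZ_N(t_0,x;t_1,x_1)\,NT\,\d x$, replaces the propagator $\ZZ_N(t_0,x;t_1,x_1)$ by the heat kernel (up to a factor $e^{\pm N^2T(t_1-t_0)^{1/6}u}$), compares the \emph{deterministic} ratio $\hk(T(t_1-t_0),NT(x_1-x))/\hk(T(t_2-t_0),NT(x_2-x))$, and then re-assembles into $\ZZ_N(t_2,x_2)$ via the semigroup identity in reverse. The exponents $1/7$ and $1/13$ fall out of the choice $t_1-t_0\asymp (t_2-t_1)^{6/13}+|x_2-x_1|^{6/7}$, which makes that heat-kernel ratio bounded by $\exp\big(cN^2Tu(|t_2-t_1|^{1/13}+|x_2-x_1|^{1/7})\big)$.

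The gap in your approach is that Property~\ref{property.limit} gives only the first-order exponential asymptotics of products $\EE[\prod_\cc \ZZ_{N,\alpha}^{N\mm_\cc}]$; it does not give bounds on moments of increments $|\ZZ_N(s,y;t,x_2)-\ZZ_N(s,y;t,x_1)|$, nor the uniform-in-$x$ two-sided control on the propagator that the heat-kernel swap requires. Your conditioning-plus-Markov step pins $\hh_N(t,x_1)$ and $\hh_N(t,x_2)$ each to within $O(1)$ of $\parab$, but the difference of two $O(1)$ quantities is $O(1)$, not $O(|x_2-x_1|^{1/7})$; you seem to recognize this and fall back on ``H\"older regularity of the forward propagator whose moments are controlled by Property~\ref{property.limit}'', but that property simply does not furnish such regularity. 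You are right to flag the lower tail as the principal obstacle; the paper resolves it by citing established lower-tail estimates and then upgrading them to uniform bounds via line-ensemble monotonicity, not by the Jensen argument you sketch (which, with a distribution-valued noise and Wick ordering, does not obviously produce a sub-Gaussian bound at the scale $N^3T$).
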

\noindent{}%
After proving this, we will use it and Proposition~\ref{p.shape} to complete the proof of Theorem~\ref{t.main}.

\subsection{Proof of Proposition~\ref{p.conti}}
\label{s.conti.}
Let us outline the proof.
Fix $R<\infty$, assume $t_1\leq t_2$, write $c=c(R)$ to simplify notation, and call a probability \textbf{affordable} if it is $\leq c\,\exp(-N^3Tu^{3/2}/c)$.
As will be seen, the proof below works for all $u\geq c$, for some $c$.
We seek to show that, up to an affordable probability,
\begin{subequations}
\label{e.conti.goal}
\begin{align}
	\ZZ_N(t_1,x_1) 
	\leq 
	& \exp\big( cN^2T(|t_2-t_1|^{1/13}+|x_2-x_1|^{1/7})u \big) \ZZ_N(t_2,x_2) 
\\	
	&+ \exp\big( -N^2Tu / (c\,((t_2-t_1)^{6/13} + |x_2-x_1|^{6/7})) \big) \, \ZZ_N(t_1,x_1).
\end{align}
\end{subequations}
Once this is done, half of the desired bound $\hh_N(t_1,x_1) \leq (|t_2-t_1|^{1/13}+|x_2-x_1|^{1/7})cu + \hh_N(t_2,x_2) $ follows.
The other half can be proven similarly, which we omit.
To obtain \eqref{e.conti.goal}, we start from the semigroup identity
$
	\ZZ_{N}(t_1,x_1)
	=
	\int_{\R} NT \d x \, \ZZ_{N}(t_0,x) \, \ZZ_{N}(t_0,x;t_1,x_1)
$
and divide the integral into two, one within and one outside a closed interval.
We will bound the integral outside the closed interval by the last term in \eqref{e.conti.goal}.
For the integral within the close interval, we will argue that $\ZZ_{N}(t_0,x;t_1,x_1)$ is approximately (at the exponential scale) bounded by $\ZZ_{N}(t_0,x;t_2,x_2)$ and use the semigroup identity in reserve to obtain the second last term in \eqref{e.conti.goal}.

We need some tools.
Let $\hk(t,x):=\exp(-x^2/(2t))/\sqrt{2\pi t}$ denote the heat kernel and set
\begin{align}
	\WW_N(t,x;t',x'):=\ZZ_N(t,x;t',x')/\hk(T(t'-t),NT(x'-x)).
\end{align}
Namely, $\WW_N$ is given by the Feynman--Kac formula \eqref{e.ZZ.ptp} with $\Ebm[\ldots\delta_{NTx'}(\X(T(t-t_0)))]$ replaced by the law of a Brownian bridge that starts from $NTx'$ and ends at $NTx$.
The Feynman--Kac formula shows that $W_N(0,0;\Cdot,\Cdot)$ and $W_N(t_0,x_0;\Cdot+t_0,\Cdot+x'_0)$ have the same law --- which we call the \textbf{shift invariance} --- and that $W_N(0,0;t_0,\Cdot)$ and $W_N(0,\Cdot;t_0,0)$ have the same law --- which we call the \textbf{time-reversal symmetry}.
The following bounds can be derived from existing one-point tail bounds of the KPZ equation through the line-ensemble arguments in \cite{corwin2021kpz}.
For the sake of completeness, we present the argument in Appendix~\ref{s.a.tools}.
For all $u,\gamma\geq 1$ and $t\in(0,1]$, 
\begin{align}
	\label{e.conti.pt-to-segment}
	\PP\Big[ \sup_{|x|\leq \gamma\sqrt{u}} \Big| \frac{1}{N^2T} \log \WW_N(0,0;t,x) \Big| > t^{1/6}u\Big] &\leq c {\gamma}^2 e^{-\frac{1}{c}N^3T u^{3/2}},
\\
	\label{e.conti.pt-to-line}
	\PP\Big[ \sup_{x\in\R} \Big\{ \frac{1}{N^2T} \log \WW_N(0,0;t,x) - |x| \Big\} > u\Big] &\leq c \,e^{-\frac{1}{c}N^3T u^{3/2}}.
\end{align}

We proceed to implement the steps outlined above.
Invoke the semigroup identity mentioned above and divide the result into $|x|\leq \gamma\sqrt{u}$ and $|x|>\gamma\sqrt{u}$, for some $\gamma$ to be specified later:
\begin{align}
	\label{e.conti.decomp}
	\ZZ_N(t_1,x_1) 
	= 
	\Big(\int_{|x|\leq\gamma\sqrt{u}} \d x \,NT\, + \int_{|x|>\gamma\sqrt{u}} \d x \,NT\, \Big) \,  \ZZ_N(t_0,x) \, \ZZ_N(t_0,x;t_1,x_1).
\end{align}

Let us bound the last integral in \eqref{e.conti.decomp}.
Write $\ZZ_N(t_0,x) = \ZZ_N(0,0;t_0,x) = \WW_N(0,0;t_0,x) \hk(Tt_0,NTx)$ and apply \eqref{e.conti.pt-to-line}.
As for $\ZZ_N(t_0,x;t_1,x_1)$, note that $\WW_N(t_0,\Cdot;t_1,x_1)$ has same law as $\WW_N(0,0;t_1-t_0,\Cdot)$ by the shift invariance and time-reversal symmetry; then apply \eqref{e.conti.pt-to-line}.
The result shows that, up to an affordable probability, the last integral in \eqref{e.conti.decomp} is bounded by
\begin{align}
	e^{N^2T\cdot 2u} \int_{|x|>\gamma\sqrt{u}} \d x\, NT  
	e^{N^2T(|x|+|x_1-x|)} \hk(Tt_0,NT x) \hk(T(t_1-t_0),NT(x_1-x)).
\end{align}
Recognize the last integral as
\begin{align}
	\label{e.integral=exp}
	\Ebm\big[e^{N\,(|\X(Tt_0)|+|NTx_1-\X(Tt_0)|)} \ind_{\{|\X(Tt_0)|>NT\gamma\sqrt{u}\}}\delta_0(\X(Tt_1)-NTx_1)\big],
\end{align}
where $\X=$ (standard BM).
Given that $|x_1|\leq R$, that $R$ is fixed, and that $u\geq1$, we fix a large enough $\gamma$ such that \eqref{e.integral=exp} is bounded by $\hk(Tt_1,NTx_1)\exp(-N^2Tu/((t_1-t_0)c_1))$, where we labeled the constant by $c_1$ for future references.
Recall that $\ZZ_N(t_1,x_1)=\WW_N(t_1,x_1)\hk(Tt_1,NTx_1)$ and that $\WW_N(t_1,x_1)$ has the same law as $\WW_N(t_1,0)$.
These properties and \eqref{e.conti.pt-to-segment} give, up to an affordable probability, $\hk(Tt_1,NTx_1)\leq\exp(N^2Tu)\ZZ_N(t_1,x_1)$.
We will choose $t_0$ in the next paragraph such that $t_1-t_0\leq 1/(4c_1)$.
Hence, up to an affordable probability,
\begin{align}
	\label{e.remainder}
	\text{(last integral in \eqref{e.conti.decomp})}
	\leq
	\hk(Tt_1,NTx_1) e^{N^2T u\,(2-\frac{1}{(t_1-t_0)c_1})}
	\leq
	e^{-\frac{N^2Tu}{(t_1-t_0)4c_1}}\ZZ_N(t_1,x_1).
\end{align}

Let us bound the first integral in \eqref{e.conti.decomp}.
Use \eqref{e.conti.pt-to-segment} for $t=t_1$ and for $t=t_2$, together with the time-reversal symmetry and shift invariance.
The result gives, up to an affordable probability,
$
	\ZZ_N(t_0,x;t_1,x_1) \leq \exp(N^2Tu(t_1-t_0)^{1/6}) \hk(T(t_1-t_0),NT(x_1-x))
$
and
$
	\exp(-N^2Tu(t_2-t_0)^{1/6}) \hk(T(t_2-t_0),NT(x_2-x)) \leq \ZZ_N(t_0,x;t_2,x_2).
$
Combining these bounds shows that the first integral in \eqref{e.conti.decomp} is bounded by
\begin{align}
	e^{2N^2Tu\,(t_2-t_0)^{1/6}} \int_{|x|\leq\gamma\sqrt{u}} \d x\, NT \, \ZZ_N(t_0,x) \frac{\hk(T(t_1-t_0),NT(x_1-x))}{\hk(T(t_2-t_0),NT(x_2-x))} \, \ZZ_N(t_0,x;t_2,x_2).
\end{align}
Fix $t_0$ by setting $(t_1-t_0) := \min\{1/R, 1/(4c_1), (t_2-t_1)^{6/13} + |x_2-x_1|^{6/7} \}$.
The term $1/R$ ensures $t_0\geq0$.
The term $1/(4c_1)$ fulfills the requirement $(t_1-t_0)\leq 1/(4c_1)$ from the previous paragraph.
The term $(t_2-t_1)^{6/13} + |x_2-x_1|^{6/7}$ ensures that the ratio of heat kernels in the last integral is bounded by $\exp(N^2Tcu\,(|t_2-t_1|^{1/13}+|x_2-x_1|^{1/7}))$, which is straightforward (though tedious) to verify.
Use the last bound in the integral, factor the bound out of the integral, release the integral to $x\in\R$, and use the semigroup identity to rewrite the integral as $\ZZ_N(t_2,x_2)$.
Doing so gives 
\begin{align}
	\label{e.firstintegralin...}
	\text{(first integral in \eqref{e.conti.decomp})} \leq e^{cN^2T(|t_2-t_1|^{1/13}+|x_2-x_1|^{1/7})u} \ZZ_N(t_2,x_2).
\end{align}

Given our choice of $(t_1-t_0)$, the right hand side of \eqref{e.remainder} is bounded by the last terms in \eqref{e.conti.goal}.
Hence, inserting \eqref{e.remainder}--\eqref{e.firstintegralin...} into \eqref{e.conti.decomp} gives the desired result \eqref{e.conti.goal}.

\subsection{Completing the proof of Theorem~\ref{t.main}}
\label{s.conti.tmain}
First, Proposition~\ref{p.conti} can be leveraged into a stronger result:
Given any fixed $(v,v')\in(0,1/13)\times(0,1/7)$, the event that
\begin{align}
	\label{e.conti.uniform}
	&\big| \hh_N(t_1,x_1)-\hh_N(t_2,x_2) \big | \leq (|t_2-t_1|^v+|x_2-x_1|^{v'})\, u,
\\
	\label{e.conti.uniform.}
	&\big| \tfrac{1}{N^2T}\log\big(\ZZ_N(0,x_0;t_1,x_1)\big/\ZZ_N(0,0;t_1,x_2)\big) \big| \leq (|x_0|^{v'}+|x_2-x_1|^{v'}) \, u,
\end{align}
for all $t_1,t_2\in[1/R,1]$ and $x_0,x_1,x_2\in[-R,R]$, holds up to an affordable probability.
This result is stronger than Proposition~\ref{p.conti} because the conditions \eqref{e.conti.uniform}--\eqref{e.conti.uniform.} are required to hold \emph{simultaneously} for all $t_1,t_2,x_0,x_1,x_2$ in their designated ranges.
One can use the standard argument in the proof of Kolmogorov’s continuity theorem to derive the stronger result from Proposition~\ref{p.conti}.
We refer to the proof of Proposition~3.4 in \cite{lin21} for an instance of such a derivation and omit it here.

Based on \eqref{e.conti.uniform}--\eqref{e.conti.uniform.}, let us show that $\hh_N(t,x)$ and $\hh_{N,\alpha}(t,x)$ approximate each other.
Write the latter as $(N^2T)^{-1}\log\int_{-\alpha}^{\alpha} \d x_0\,NT\, \ZZ_N(0,x_0;t,x)$ and apply \eqref{e.conti.uniform.} with $t_1=t$, $x_1=x_2=x$, and $v'=1/9$.
Up to an affordable probability, the integrand is sandwiched between $\exp(\pm N^2Tu\alpha^{1/9})\ZZ_N(0,0;t,x)=\exp(N^2T(\hh_N(t,x)\pm u\alpha^{1/9}))/\sqrt{T}$.
The last expression does not depend on $x_1$; factor it out of the integral.
After being simplified, the result gives
$
	|\hh_{N,\alpha}(t,x)-\hh_N(t,x)| \leq (N^2T)^{-1}\log(\alpha\sqrt{N^2T}) + u\alpha^{1/9}.
$
Set $r=u\alpha^{1/9}$.
We have, for every $t\in[1/R,1]$ and $r\geq \alpha^{1/9}$,
\begin{align}
	\label{e.soften.approx}
	\PP\Big[ \sup_{x\in[-R,R]} \big|\hh_{N,\alpha}(t,x)-\hh_{N}(t,x)\big| > r + \tfrac{1}{N^2T}\log(\alpha\sqrt{N^2T})  \Big]
	\leq
	c\, e^{-\frac{1}{c} N^3T r^{3/2} \alpha^{-1/6}}.
\end{align}

Lemma~\ref{l.conti.}, which was used in Section~\ref{s.shape}, follows immediately from the preceding results.
Part~\ref{l.conti..1} follows from \eqref{e.conti.uniform} and \eqref{e.soften.approx}.
Part~\ref{l.conti..2} follows from \eqref{e.conti.pt-to-segment}, the identity $\ZZ_{N}(t_1,x;t_2,y_2)=\WW_N(t_1,x;t_2,y_2)\hk(T(t_2-t_1),NT(y_2-x))$, the time-reversal symmetry, and the shift invariance.

Let us finish the rest of the proof of Theorem~\ref{t.main}.
By \eqref{e.soften.approx}, Corollary~\ref{c.ldp.soften} and Proposition~\ref{p.shape} hold with $\hh_{N,\alpha}$ being replaced by $\hh_N$.
They give \eqref{e.t.ldp} and the one-point version of \eqref{e.t.mps.}, respectively.
By the union bound, the one-point version automatically extends into a multipoint version:
For any $(t_1,x_1),\ldots,(t_k,x_k)\in[-1/R,1]\times[-R,R]$,
\begin{align}
	\label{e.t.mps.multi}
	\PP\Big[ 
		\max_{j=1,\ldots,k}\big|\hh_N(t_j,x_j)-\mps(t_j,x_j)\big| > \tfrac{1}{R}  \ \Big| \ \eventhh_{N,\delta}(\vechv) 
	\Big]
	\ll e^{-N^3T\cdot 0}.
\end{align}
Combining \eqref{e.conti.uniform} and \eqref{e.t.mps.multi} gives \eqref{e.t.mps.}.

\appendix

\section{Proof of technical tools}
\label{s.a.tools}
\begin{proof}[Proof of Lemma~\ref{l.ldp.upb}]
\ref{l.ldp.upb.1}\ 
It is straightforward to check from the definition of $\hfc{}$ that the map $\R^n\to\hvspacec(t,\vecxx): \vechv\mapsto (\hfc{,t,\vecxx,\vechv}(t,\xx_\cc))_{\cc=1}^n$ is continuous and agrees with the identity map on $\hvspacec(t,\vecxx)$.
Also, recall from Property~\ref{property.legendre} that $\ratekpz(t,\vecxx,\Cdot):\hvspacec(t,\vecxx)\to[0,\infty)$ is convex and continuous.
Next, let us verify that $(\ratekpz\circ\hfc{})(t,\vecxx,\Cdot):\R^n\to[0,\infty)$ is convex.
Take any $\vechv,\vechvi\in\R^n$ and $u\in[0,1]$, and set $\hfc{,1}:=u\hfc{,t,\vecxx,\vechv}+(1-u)\hfc{,t,\vechvi}$ and $\hfc{,2}:=\hfc{,t,\vecxx,u\vechv+(1-u)\vechvi}$.
Note that $\hfc{,1}(\xx_\cc) \geq \max\{ u\hv_\cc+(1-u)\hvi_\cc, \parab(t,\xx_\cc) \}$, for all $\cc$.
Since the hypograph of $\hfc{,2}$ is the convex hull of $\{(x,\parab(t,x))\}_{x\in\R}\cup\{(\xx_\cc,\hv_\cc)\}_{\cc=1}^n$, we have $\hfc{,1} \geq \hfc{,2}$.
From this property and the fact that $\hfc{,1}$ and $\hfc{,2}$ are concave, it is not hard to verify that $\ratekpz((\hfc{,1}(\xx_\cc))_{\cc=1}^n)\geq\ratekpz((\hfc{,2}(\xx_\cc))_{\cc=1}^n)$, which gives the convexity of $(\ratekpz\circ\hfc{})(t,\vecxx,\Cdot)$.
Collecting the preceding properties gives that $(\ratekpz\circ\hfc{})(t,\vecxx,\Cdot):\R^n\to[0,\infty)$ is convex and continuous on $\R^n$, and is strictly convex and agrees with $\ratekpz$ on $\hvspacec$.
The desired result hence follows.

\ref{l.ldp.upb.2}\
First, note that $\hfc{}$ and $(\ratekpz\circ\hfc{})$ can be realized as $\hf{}$ and $\ratekpz$ through a reduction procedure.
For $\vechv\in\R^n$, consider $\mathsf{Redu}(\vechv):=\{\cc\in\{1,\ldots,n\} : \hfc{,t,\vecxx,\vechv}(\xx_\cc) = \hv_{\cc} \}$, which reduces the full index set $\{1,\ldots,n\}$ to a subset that suffices for characterizing $\hfc{}$.
For $\vecxx\in\R^n$ and $C\subset\{1,\ldots,n\}$, write $\vecxx_C := (\vecxx_a)_{a\in C}\in\R^{C}$ and similarly for $\vechv_C $.
It is not hard to check that, for all $\vechv\in\R^n$,
\begin{align}
	\label{e.ratekpz.reduced}
	\vechv_{\mathsf{Redu}(\vechv)}\in\hvspacec(t,\vecxx_{\mathsf{Redu}(\vechv)}),
	\qquad
	\big(\ratekpz\circ\hfc{}\big)(t,\vecxx,\vechv)
	=
	\ratekpz\big(t,\vecxx_{\mathsf{Redu}(\vechv)},\vechv_{\mathsf{Redu}(\vechv)}\big).
\end{align}

We now prove \eqref{e.l.ldp.upb.2}.
The first step is to truncate the set $F$.
By Properties \ref{property.limit}--\ref{property.legendre} in Section~\ref{s.momt.tsai2023high}, for every $\cc$ and $R>1+\xx_{\cc}^2/(2t)$, $\limsup_{\alpha\to 0}\limsup_{N\to\infty} (N^3T)^{-1}\log\PP\big[ \hh_{N,\alpha}(t,\vecxx_\cc) \geq R \big]< -\ratekpz(t,\xx_\cc,R-1,\ldots,R-1)$.
The last quantity tends to $-\infty$ as $R\to\infty$.
Consider the truncated set $ F':=\cap (-\infty,R]^n$.
Once we have proven \eqref{e.l.ldp.upb.2} with $F'$ replacing $F$, sending $R\to\infty$ recovers \eqref{e.l.ldp.upb.2} itself.
Next, for each nonempty $C\subset\{1,\ldots,n\}$, consider $F'_C := \{ \vechv_{C} : \vechv\in F', \mathsf{Redu}(\vechv)=C \}$, and define 
$
	F'_\emptyset := \{ \vechv : \vechv\in F', \mathsf{Redu}(\vechv)=\emptyset \}.
$
We have $F' \subset (\cup_{\emptyset\neq C\subset\{1,\ldots,n\}} (F'_C\times \R^{\{1,\ldots,n\}\setminus C}))\cup F'_\emptyset$.
If $F'_\emptyset$ is nonempty, $F'$ contains an $\vechv$ such that $\hv_\cc<-\xx_\cc^2/(2t)$ for all $\cc$.
In this case, $\inf_{F'}(\ratekpz\circ\hfc{})=0$, so the desired result follows trivially.
We assume $F'_\emptyset=\emptyset$ hereafter.
Under this assumption, the last inclusion relation gives
\begin{align}
	\label{e.l.ldp.upb.2.}
	\PP\big[ (\hh_{N,\alpha}(t,\vecxx_\cc))_{\cc=1}^n \in F' \big]
	\leq
	\sum_{C} \PP\big[ (\hh_{N,\alpha}(t,\vecxx_\cc))_{\cc\in C} \in F'_C \big],
\end{align} 
where the sum runs over all nonempty subset $C$ of $\{1,\ldots,n\}$.
Note that $F'_C$ is closed (because $F'$ is), is contained in $\hvspacec(t,\vecxx_C)$ (because of the first relation in \eqref{e.ratekpz.reduced}), and is bounded (thanks to the truncation).
Hence $F'_C$ is a compact subset of $\hvspacec(t,\vecxx_C)$.
Given this property, Corollary~\ref{c.ldp.soften} together with the continuity of $\ratekpz$ gives
\begin{align}
	\label{e.l.ldp.upb.2..}
	\limsup_{\alpha\to 0} 
	\limsup_{N\to\infty}
	\frac{1}{N^3T} \PP\big[ (\hh_{N,\alpha}(t,\vecxx_\cc))_{\cc\in C} \in F'_C \big] \leq - \inf_{F'_C} \, \ratekpz.
\end{align}
From the second relation in \eqref{e.ratekpz.reduced} and the definition of $F'_C$, it is not hard to verify the identity $\min_{C}\inf_{F'_C} \ratekpz = \inf_{F'}(\ratekpz\circ\hfc{})$.
Combining this identity with \eqref{e.l.ldp.upb.2.}--\eqref{e.l.ldp.upb.2..} gives \eqref{e.l.ldp.upb.2} with $F'$ in place of $F$.
This completes the proof.
\end{proof}

\begin{proof}[Proof of \eqref{e.fkg}]
Let us prove the analog of \eqref{e.fkg} where $\ZZ_{N,\alpha}$ is replaced by $\ZZ_{N}$; the proof of \eqref{e.fkg} is the same but requires heavier notation.
The scaling by $N,T$ is irrelevant, so consider $\ZZ(t,x):=\ZZ_N(t/N,x/(NT))$ and $\ZZ(t_0,x_0;t,x)$ similarly.
By the FKG inequality, for any $f\in\Csp(\R)$,
\begin{subequations}
\label{e.fkg.}
\begin{align}
	\PP&\big[ \ind_{\{ \inf_{x\in[y,y']}(\ZZ(t_1,y_1;t_2,x) -f(x))\geq 0 \}} \cdot \prod\nolimits_{j}\ind_{\{ \ZZ(t,x_{j}) \geq r_j \}} \big]
\\
	&\geq
	\PP\big[ \inf\nolimits_{x\in[y,y']}(\ZZ(t_1,y_1;t_2,x) -f(x))\geq 0  \big] \cdot \PP\big[\prod\nolimits_{j}\ind_{\{ \ZZ(t,x_{j}) \geq r_j \}} \big].
\end{align}
\end{subequations}
To see why this follows from FKG, pretend for a moment that the spacetime white noise $\noise$ is function-valued and observe, from the Feynman--Kac formula \eqref{e.ZZ.ptp}, that $\ZZ$ appears to be a monotone function of $\noise$.
To make this observation rigorous, recall from \cite{alberts2014intermediate} that $\ZZ$ can be obtained as the continuum limit of the partition function of a discrete polymer.
In the discrete polymer, the spacetime white noise $\noise$ is replaced by an iid field over $\Z_{>0}\times\Z$, and the partition function is truly an increasing function of the iid field.
By the FKG inequality, the analog of \eqref{e.fkg.} for the discrete polymer holds, and taking the continuum limit of the analog gives \eqref{e.fkg.}.
Now, assume that $x_1,\ldots,x_m$ are distinct and integrate \eqref{e.fkg.} over $r_1,\ldots,r_m\in(0,\infty)$.
The result gives
$
	\EE[ \ind_{\{ \inf_{x\in[y,y']}(\ZZ(t_1,y_1;t_2,x) -f(x))\geq 0 \}}\cdot \prod_{j}\ZZ(t,x_{j}) ]
	\geq
	\PP[ \inf_{x\in[y,y']}(\ZZ(t_1,y_1;t_2,x) -f(x))\geq 0 ] \cdot \EE[\prod_{j}\ZZ(t,x_{j}) ].
$
By the continuity of $\ZZ(t,x)$ in $x$, the same inequality holds even when $x_1,\ldots,x_m$ are not distinct.
Taking the complement of the inequality gives the desired result.
\end{proof}

\begin{proof}[Proof of \eqref{e.conti.pt-to-segment}--\eqref{e.conti.pt-to-line}]
Write $\WW_N(t,x):=\WW_N(0,0;t,x)$ to simplify notation.
We begin by establishing the point-to-point tail bounds:
For all $(t,x)\in(0,1]\times\R$,
\begin{align}	
	\label{e.conti.pt-to-pt.up}
	&&&&\PP\big[ \tfrac{1}{N^2T} \log \WW_N(t,x) > u \big] 
	&\leq 
	c \, e^{-\frac{1}{c} N^3T u^{3/2}t^{-1/2}}
	\leq
	c \, e^{-\frac{1}{c} N^3T u^{3/2}t^{-1/3}},
	&&
	u \geq t,
\\
	\label{e.conti.pt-to-pt.lw}
	&&&&\PP\big[ \tfrac{1}{N^2T} \log \WW_N(t,x) < -u \big] 
	&\leq 
	c \, e^{-\frac{1}{c} N^3T u^{2}t^{-1/2}}
	\leq
	c \, e^{-\frac{1}{c} N^3T u^{3/2}t^{-1/3}},
	&&
	u \geq t^{1/3}.
\end{align}
First, by the shift invariance of $\WW_N$, we may assume $x=0$.
The bound \eqref{e.conti.pt-to-pt.lw} follows by combining \cite[Thm.\ 1.7]{das21iter} and \cite[Prop.\ 2.11]{corwin2021kpz}.
For $Tt\leq$ (fixed constant), the bound \eqref{e.conti.pt-to-pt.up} is proven in \cite[Thm.\ 1.4, Prop.\ 3.1]{das21iter} by bounding the positive-integer moments.
Here, we need \eqref{e.conti.pt-to-pt.up} to hold for all $t\in(0,1]$, which we achieve by deriving a moment bound that holds for all time.
Let $\WW(t,x):=\WW_N(t/T,x/(NT))$ and use the Feynman--Kac formula~\eqref{e.feynmankac} to get
$
	\EE[\WW(t,0)^k]
	=
	\Ebb[ \exp(\int_0^t \d s \sum_{\ii<\jj}\delta_0(\X_{\ii}-\X_{\jj})) ],
$
where, under $\Ebb$, $\X_1,\ldots,\X_k$ are independent Brownian Bridges (BBs) connecting $(0,0)$ and $(t,0)$.
The BBs are semimartingales, with $\d \X_\ii = -(\X_{\ii}/(t-s))\, \d s + \d \bm_\ii$, where $\bm_1,\ldots,\bm_k$ are independent BMs, so by the It\^{o}--Tanaka--Meyer formula,
\begin{align}
	\int_0^t\d s \,\sum_{\ii<\jj}\delta_0(\X_{\ii}-\X_{\jj})
	=&
	\int_0^t\d s \sum_{\ii=1}^{k} \,\frac{\X_{\ii}}{t-s} \sum_{\jj:\jj\neq \ii} \frac12 \sgn(\X_{\ii}-\X_{\jj})
\\
	&-
	\int_0^t \sum_{\ii=1}^{k} \d\bm_\ii \sum_{\jj:\jj\neq \ii} \frac12 \sgn(\X_{\ii}-\X_{\jj}),
\end{align}
where $\sgn(x):=(x/|x|)\ind_{x\neq 0}$.
Insert the last identity into the last expectation and use the Cauchy--Schwarz inequality to separate the contribution of the two integrals.
For the integral wrt to $\d s$, bound the sum over $\jj$ by $2\cdot k/2$, use the independence of the BBs, and perform a time reversal to turn $1/(t-s)$ into $1/s$.
For the integral wrt to $\sum_{\ii}\d\bm_\ii$, note that the integral is a martingale of quadratic variation $k(k^2-1)t/3$.
Doing so gives
$
	\EE[\WW(t,0)^k]
	\leq
	\Ebb[ \exp( k\int_0^t \d s |\X|/s) ]^{k/2} \exp(k^3t/12).
$
Replace $\Ebb$ with $\Ebm$ in the last expectation; doing so only makes the result larger.
It is not hard to show that
$ 
	\Ebm[ \exp( k\int_0^t \d s |\X|/s) ]
	\leq
	\Ebm[\exp(2k|\X(t)|)],
$
for example by Taylor expanding the left hand side and bounding the resulting terms with the aid of Minkowski's inequality.
Bound the last expectation by $2\Ebm[\exp(2k\X(t))]=2\exp(2k^2t)$.
Altogether, $\EE[\WW(t,0)^k]\leq 2\exp((13/12)k^3t)$, for all $t>0$ and $k\in\Z_{>0}$.
Using this and the exponential Chebyshev inequality gives \eqref{e.conti.pt-to-pt.up}.

The next step is to leverage the point-to-point bounds \eqref{e.conti.pt-to-pt.up}--\eqref{e.conti.pt-to-pt.lw} into the point-to-line bounds \eqref{e.conti.pt-to-segment}--\eqref{e.conti.pt-to-line} by the arguments in \cite{corwin2021kpz}.
We begin with \eqref{e.conti.pt-to-segment}.
By the shift invariance of $\WW_N$, we replace the interval in \eqref{e.conti.pt-to-segment} with $[0,2\gamma\sqrt{u}]$ and call it $[0,K]$.
Divide the interval into equally spaced subintervals of length $\zeta$, to be specified later, and write $\zeta_j:=j\zeta$.
Given \eqref{e.conti.pt-to-pt.lw}, we apply the coupling argument in \cite{corwin2021kpz} that leads to Eq.\ (20) there (in the proof of Prop.\ 4.2, see Figure~1 there).
When applied to the intervals $[\zeta_{j-1},\zeta_j]$ and $[\zeta_j,\zeta_{j+1}]$, the argument gives, for all $u\geq t^{1/3}$,
\begin{align}
	\label{e.cgh.1.}
	\PP\Big[ \sup_{[\zeta_{j},\zeta_{j+1}]}\!\! \frac{1}{N^2T}\log\WW_N(t,x) \geq u \Big]
	\leq
	2\PP\Big[ \frac{1}{N^2T}\log\WW_N(t,\zeta_j) \geq A_1 \Big]
	+
	c\, e^{-\frac{1}{c} N^3T u^{3/2}t^{-1/3}},
\end{align}
where $A_1:=u/4 + (\zeta_{j}^2-\zeta_{j+1}^2)/(2t)$.
Apply \eqref{e.conti.pt-to-pt.up} to bound the second last term in \eqref{e.cgh.1} to get
\begin{align}
	\label{e.cgh.1}
	\PP\Big[ \sup_{[\zeta_{j},\zeta_{j+1}]} \frac{1}{N^2T}\log\WW_N(t,x) \geq u \Big]
	\leq
	c \, e^{-\frac{1}{c} N^3T (A_1)_+t^{-1/3}}
	+
	c\, e^{-\frac{1}{c} N^3T u^{3/2}t^{-1/3}}.
\end{align}
Next, for the lower-tail bound, we apply the stochastic monotonicity in \cite[Lem.\ 2.4]{corwin2021kpz}, which is based on \cite[Lem.\ 2.6--2.7]{corwin2016kpz}.
When applied to the interval $[\zeta_j,\zeta_{j+1}]$, the monotonicity together with \eqref{e.conti.pt-to-pt.lw} gives, for all $u\geq t^{1/3}$,
\begin{align}
	\label{e.cgh.2}
	\PP\Big[ \inf_{[\zeta_{j},\zeta_{j+1}]} \frac{1}{N^2T}\log\WW_N(t,x) \leq -u \Big]
	\leq
	c\, e^{-\frac{1}{c} N^3T (A_2)_+^2/\zeta}
	+
	c\, e^{-\frac{1}{c} N^3T u^{3/2}t^{-1/3}},
\end{align}
where $A_2:= u/2-(\zeta_{j+1}^2-\zeta_{j}^2)/(2t) $.
Now, choose $\zeta=(100 K)^{-1}t^{4/3}$.
This way, the terms $A_1$ and $A_2$ are at least $u/c$.
Take the union bound of \eqref{e.cgh.1}--\eqref{e.cgh.2} over $j=0,\ldots,K/|\zeta|$.
Doing so gives, for all $u'\geq t^{1/3}$ and $K \geq 1$,
\begin{align}
	\label{e.conti.pt-to-segment.}
	\PP\Big[ \sup_{|x|\leq K} \Big| \frac{1}{N^2T} \log \WW_N(0,0;t,x) \Big| > {u'}\Big] 
	&
	\leq c K^2 t^{-4/3} e^{-\frac{1}{c} N^3T {u'}^{3/2}t^{-1/3}}.
\end{align} 
This gives \eqref{e.conti.pt-to-segment} by substituting $u'=t^{1/6}u$ and $K=\gamma\sqrt{u}$.
The proof of \eqref{e.conti.pt-to-line} proceeds similarly, by dividing the whole $\R$ into subintervals of length $\zeta=(100)^{-1}t^{4/3}$.
\end{proof}

\section{A sign of symmetry breaking}
\label{s.a.symmetrybreaking}
Here we consider the two-delta-like initial condition $\ZZ_{N,\alpha}(0,\Cdot) = \ind_{[-\alpha-1,-1+\alpha]}+\ind_{[-\alpha+1,1+\alpha]}$ and analyze the moments $\EE[\ZZ_{N,\alpha}(1,0)^{N\mm}]$ under \eqref{e.scaling}.
By the linearity of the SHE, 
\begin{align}
	\label{e.symmetrybreaking.expand}
	\EE[\ZZ_{N,\alpha}(1,0)^{N\mm}]
	=
	\hspace{-15pt}
	\sum_{\mm_-\in[0,\mm]\cap\frac{1}{N}\Z} \binom{N\mm}{N\mm_-}
	\EE\big[\ZZ_{N,\alpha}(0,-1;1,0)^{N\mm_-}\, \ZZ_{N,\alpha}(0,+1;1,0)^{N(\mm-\mm_-)}\big].
\end{align}
Recall $\WW_N$ and its time-reversal symmetry from Section~\ref{s.conti.}.
Use the symmetry to turn the last expectation into $\EE[\ZZ_{N,\alpha}(0,- 1)^{N\mm_-}\ZZ_{N,\alpha}(0,1)^{N (\mm-\mm_-)}]$, apply Property~\ref{property.limit} in Section~\ref{s.momt.tsai2023high}, and note that the binomial factor in \eqref{e.symmetrybreaking.expand} is negligible compared to $\exp(O(N^3T))$ under \eqref{e.scaling}.
We have
\begin{align}
	\label{e.symmetrybreaking.limit}
	\EE[\ZZ_{N,\alpha}(1,0)^{N\mm}]
	\sim
	\sup
	\momshe\big( 0 \xrightarrow{1} ((-1,1),(\mm_-,\mm_+)) \big),
\end{align}
where the supremum runs over all $\mm_-,\mm_+\in[0,\mm]$ with $\mm_-+\mm_-=\mm$.
As will be shown in the next paragraph, the supremum is achieved at $\mm_-=\mm$ and $\mm_+=\mm$ and at these two places only.
Looking back at \eqref{e.symmetrybreaking.expand}, we see that the moment is dominated by the contribution from either one of the deltas, which hints at the symmetry breaking stated in \eqref{e.symmetry.breaking}.

Let us analyze the supremum in \eqref{e.symmetrybreaking.limit}.
Recall Property~\ref{property.legendre}, let $(\hv_-,\hv_+)$ be the Legendre-dual variable of $(\mm_-,\mm_+)$, and let $\optimal_1, \optimal_2$ be the corresponding noise corridors, namely the corridors for $\{(\xx_\cc,\hv_\cc)\}_\cc=\{(-1,\hv_-),(+1,\hv_+)\}$.
In the extreme case $(\mm_-,\mm_+)=(\mm,0)$, there is only one corridor, which is the line that connects $(t,x)=(1,-1)$ and $(t,x)=(0,0)$.
We let $\optimal_-$ denote this linear corridor, and do similarly for $\optimal_+$.
Recall $\mom$ from \eqref{e.mom}.
By \cite[Thm.\ 2.3]{tsai2023high}, $\momshe( 0 \xrightarrow{\scriptscriptstyle 1} ((-1,1),(\mm_-,\mm_+)) )=\mom_{[0,1]}(\mm_-\delta_{\optimal_1}+\mm_+\delta_{\optimal_2})$.
More explicitly,
\begin{subequations}
\label{e.symmetrybreaking}
\begin{align}
	\label{e.symmetrybreaking.1}
	\momshe\big( 0 \xrightarrow{1} ((-1,1),(\mm_-,\mm_+)) \big)
	=
	&\frac{1}{24} \Big( (1-s_\text{merge}) \, \big( (\mm_+)^{3} + (\mm_-)^{3} \big) + s_\text{merge} \mm^3 \Big)
\\
	\label{e.symmetrybreaking.2}
	&- \frac12 \mm_+ \int_0^1 \d s \, \big(\tfrac{\d~}{\d s}\optimal_{1}\big)^2 - \frac12 \mm_- \int_0^1 \d s \, \big(\tfrac{\d~}{\d s}\optimal_{2}\big)^2,
\end{align}
\end{subequations}
where $s_\text{merge}$ is the first time when $\optimal_1$ and $\optimal_2$ merge, viewed in backward time.
Indeed, the right hand side of \eqref{e.symmetrybreaking.1} is $\leq\mm^3/24$.
Given that $\optimal_1(0)=-1$, $\optimal_1(1)=0$, $\optimal_2(0)=1$, and $\optimal_2(1)=0$, we have $\eqref{e.symmetrybreaking.2} \leq- \frac12 \mm \int_0^1 \d s \, (\frac{\d~}{\d s}\optimal_{-})^2 = -\frac12 \mm \int_0^1 \d s \, (\frac{\d~}{\d s}\optimal_{+})^2$.
These inequalities are strict except in the extreme cases $(\mm_-,\mm_+)=(\mm,0)$ and $(\mm_-,\mm_+)=(0,\mm)$.
Therefore, the supremum in \eqref{e.symmetrybreaking.limit} is achieved at $\mm_-=\mm$ and $\mm_+=\mm$ and at these two places only.

\bibliographystyle{alpha}
\renewcommand{\bibliofont}{\tiny}
\bibliography{kpz-ldp,line-ensemble}
\end{document}